\newcommand{\catA}[1]{{\mathfrak A}}
\newcommand{\catI}[1]{{\mathfrak I}}
\newcommand{\catS}[1]{{\mathfrak S}}
\newcommand{\C}{\mathbb{C}}
\newcommand{\coker}{{\rm coker}}
\newcommand{\Hilb}{{\rm Hilb}}
\newcommand{\Quot}{\rm Quot}
\newcommand{\im}{{\rm Im}~}
\newcommand{\p}[1]{{\mathbb{P}^{#1}}}
\newcommand{\pn}{{\mathbb{P}^n}}
\newcommand{\N}{\mathbb{N}}
\DeclareMathOperator{\End}{End}
\DeclareMathOperator{\ho}{H}
\DeclareMathOperator{\Mat}
\newtheorem{theorem}{Theorem}[section]
\newtheorem{proposition}[theorem]{Proposition}
\newtheorem{lemma}[theorem]{Lemma}
\newtheorem{corollary}[theorem]{Corollary}
\newtheorem{definition}[theorem]{{\bf Definition}}
\begin{document}



\title[Quot schemes of points on affine spaces]{A note on the ADHM description of Quot schemes of points on affine spaces}

\author{Abdelmoubine A. Henni}
\author{Douglas M. Guimar\~aes}

\address{Universidade Federal de Santa Catarina \\
Departamento de Matem\'atica \\
Campus Universit\'ario Trindade\\
CEP 88.040-900 Florian\'apolis-SC, Brasil} 
\email{henni.amar@ufsc.br}

\address{Universidade Estadual de Campinas \\
Departamento de Matem\'atica  \\
Cidade Universitária "Zeferino Vaz" \\
CEP 13083-970  Campinas-SP \\
}
\email{ra209447@ime.unicamp.br}

\begin{abstract}
We give an ADHM description of the Quot scheme of points $\Quot_{\mathbb{C}^{n}}(c,r),$ of length $c$ and rank $r$ on affine spaces $\mathbb{C}^{n}$ which naturally extends both Baranovsky's representation of the punctual Quot scheme on a smooth surface and the Hilbert scheme of points on affine spaces $\mathbb{C}^{n},$ described by the first author and M. Jardim. Using results on the variety of commuting matrices, and combining them with our construction, we prove new properties concerning irreducibility and reducedness of  $\Quot_{\mathbb{C}^{n}}(c,r)$ and its punctual version $\Quot_{\mathbb{Y}}^{[p]}(c,r),$ where $p$ is a fixed point on a smooth affine variety $\mathbb{Y}.$ In this last case we also study a connectedness result, for some special cases of higher $r$ and $c.$

\end{abstract}



\maketitle
\tableofcontents


\section{Introduction}

For integers $r,c>0$ we consider the Quot scheme $\Quot_{\mathbb{Y}}(c,r)$ of zero-dimensional quotients $\lbrack \mathcal{O}_{\mathbb{Y}}^{r}\twoheadrightarrow\mathcal{Q}\rbrack,$ of length $c$ over a complex algebraic variety $\mathbb{Y}.$ This Quot scheme was intensely studied by various authors especially when $\mathbb{Y}$ is a smooth surface \cite{Bara,Elling,Li,Giesk}. In this case, $\Quot_{\mathbb{Y}}(c,r)$ is well known to be smooth and irreducible of dimension $c(r+1)$ \cite[\S II.6.A]{Huy}. Moreover, when $\mathbb{Y}=\mathbb{P}^{2},$ one can describe $\Quot_{\mathbb{P}^{2}}(c,r)$ in terms of linear data called  ADHM (Atiyah-Drinfel'd-Hitchin-Manin) data \cite{ADHM,Bara,HJM,HJ}, as it appears in Baranovsky's  proof of the irreducibility of the punctual Quot scheme \cite[Appendix A]{Bara}, where the support of the quotient sheaf $\mathcal{Q}$ is a single point. Such a description is a generalization of the Nakajima's representation of the Hilbert scheme of points on the complex plane \cite{N2}.

The ADHM data appears to be very useful in various situations when one is concerned with explicit constructions of (framed) instanton sheaves on various algebraic varieties, see \cite{Barth,donaldson1,OSS,HJM,Tikh1,Tikh2} and the references therein. In particular, on surfaces, when the rank is $1$ one gets those data for ideal sheaves of points. These algebraic data come from complexes called monads \cite[Ch.II \S3]{OSS}, \cite{HJM}, obtained in many cases from Beilinson's spectral sequence \cite[Ch.II \S3]{OSS} by using some homological characterizations. 

In \cite[\S3]{HJ} the first author and Jardim introduced complexes called {\em perfect extended monads}, as a generalization of the usual monads, in order to study the Hilbert scheme of points on affine varieties of higher dimensions. Such complexes also appear as a result of some suitable homological characterization combined to  Beilinson's spectral sequence. Furthermore, by looking at their automorphisms, one could get algebraic spaces of ADHM data that represent the Hilbert functor of points on affine varieties. This led us to prove that $\Hilb^{[c]}({\mathbb{C}^{3}})$ is irreducible for $c\le 10,$ \cite[Corollary 6.2]{HJ}. The result was achieved by connecting the ADHM data to some results concerning the irreducibility of the variety of $\mathcal{C}(n,c)$ of $n$ commuting $c\times c$ matrices \cite{G,HO,MT,OCV,S}. We remark that, for $\mathbb{C}^{3},$ the cases $c\le8$ were already known \cite{Fog,CEVV}. But our new proof allowed to extend those results to the cases $c=9$ and $c=10.$

In this work, we make use of the results obtained in \cite{HJ} in order to describe the Quot scheme of points in terms of perfect extended monads in addition to ADHM data. This leads us to prove some new results, namely;

\vspace{0.3cm}

{\bf First main result:} (Corollary \ref{small-irred} )

\begin{itemize}
\item[(i)] $\Quot_{\mathbb{C}^{3}}(c,r)$ is irreducible for $c\le 10,$ for any $r\geq 1.$

\item[(ii)] $\Quot_{\mathbb{C}^{n}}(c,r)$ is irreducible for $c\leq3$ and $\forall n\geq4,$ and $r\geq1.$ Moreover, if $c=2$ it is also reduced for every $n,r\geq 1.$

\end{itemize}

\vspace{0.3cm}

Moreover, by using results on nilpotent commuting matrices from \cite{Ngo, Ngo-Sivic} we obtain new irreducibility properties of the punctual Quot schemes over an affine variety $\mathbb{Y}$ of dimension $n,$ in the first two items bellow. The third item, bellow, is a direct consequence of our construction and the ADHM stability (Definition \ref{stability}):   

\vspace{0.2cm}
{\bf Second main result:} (Corollary \ref{quot:pt} \& Corollary \ref{cor-conecd})

\begin{itemize}
\item[(i)] $\Quot_{\mathbb{Y}}^{[p]}(c,r)$ is irreducible for $c\le 6,$ for any $r\geq 1$ and ${\rm dim}\mathbb{Y}=3,$

\item[(ii)] $\Quot_{\mathbb{Y}}^{[p]}(c,r)$ is irreducible for $c\leq 3 ,$ for any $n,r\geq 1.$ Moreover, for $c=2$ it is reduced of dimension $2r+n-3$ and for $c=3$ its dimension is $2n+3r-5.$

\item[(iii)] Let $n,c\in \N$ and $r\geq n$. Then $\Quot_{\mathbb{Y}}^{[p]}(c,r)$ is path connected.

\end{itemize}

In \cite{H}, Hartshorne proved that the Hilbert scheme is connected. Particularly, when the Hilbert polynomial is constant, this corresponds to the case $r=1,$ On the other hand, item ${\rm (iii)}$ is the most general result for higher $c$ and $r,$ and the only cases known to us are when a unique irreducible component exists.

The outline of this paper will be as the following: in section \ref{Matrix-Para} we give a brief review on the variety of $n$ commuting $c\times c$ matrices and ADHM data. Section \ref{ext-monads} is devoted to $l-$extended and perfect monads. In section \ref{z-cy} we show the three folded relation between quotients of copies of the structure sheaf, of $\mathbb{Y}$, wich are supported in zero dimension, perfect extended monads and their ADHM data. This will be useful in giving a proof of representability of the Quot functor under consideration. 
Althought the relation of bundles, ADHM data and representations of quivers is a part of a folklore \cite{BDJ,FJM,JP,N1,N2}, in Section \ref{quivers}, we show, in our special case, that the space of stable ADHM data and the space of $\vartheta-$representations of a special quiver, with relations over $\mathbb{Y},$ for some generic stability parameter $\vartheta,$ lead to the same quotient, namely $\Quot_{\mathbb{Y}}(c,r).$ 

Finally, in section \ref{irred} we prove the main results about irreducibility and connectedness.

\section{Commuting matrices and stable ADHM data}\label{Matrix-Para}

In this section we shall introduce the necessary material to our construction. Most of which is a generalization of the \cite[Section 2]{HJ} : let $V$ be a complex vector space of dimension $c$ and let $B_{0},B_{1},\ldots,B_{n-1}\in\End(V)$  be $n$ linear operators on $V$.

\begin{definition}
The variety $\mathcal{C}(n,c)$ of $n$ commuting linear operators on $V$ is the subvariety of $\End(V)^{\oplus n}$ given by
$$ \mathcal{C}(n,c)=\left\{(B_{0},B_{1},\ldots,B_{n-1})\in\End(V)^{\oplus n} ~|~  [B_i, B_j]=0, \, \forall i\neq j\right\} . $$
\end{definition}

The commutation relations can be thought of as a system of ${n\choose2}c^{2}$ homogeneous equations of degree $2$ in $nc^{2}$ variables. 


The variety $\mathcal{C}(n,c)$ have been extensively studied by various authors since a 1961 paper by Gerstenhaber \cite{G}, with special attention to its irreducibility. We recall here some of the main results:

\begin{theorem}\label{Mtz-Tau}
    $\mathcal{C}(2,c)$ is irreducible for every $c$.
\end{theorem}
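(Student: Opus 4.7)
The plan is to exhibit an explicit open, irreducible, dense subset $U \subset \mathcal{C}(2,c)$ and conclude that $\mathcal{C}(2,c) = \overline{U}$. Let $U$ consist of pairs $(A,B)$ for which $A$ has $c$ distinct eigenvalues, i.e., $A$ is regular. Then $U$ is Zariski open in $\mathcal{C}(2,c)$, being cut out by the non-vanishing of the discriminant of the characteristic polynomial of $A$. The first projection $\pi_1 \colon U \to \End(V)^{\mathrm{reg}}$ onto the open (hence irreducible) subvariety of matrices with distinct eigenvalues has fibers $\pi_1^{-1}(A) = C(A) = \mathbb{C}[A] \cong \mathbb{C}^c$, because the centralizer of a regular matrix coincides with its algebra of polynomials. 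This exhibits $U$ as a rank-$c$ vector bundle over an irreducible base of dimension $c^2$, so $U$ is irreducible of dimension $c^2 + c$.

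The core of the proof is then to establish the density $\overline{U} = \mathcal{C}(2,c)$. The standard approach is a deformation argument: given $(A_0, B_0) \in \mathcal{C}(2,c)$, build an algebraic one-parameter family $(A_t, B_t) \in \mathcal{C}(2,c)$ with $(A_0,B_0)$ at $t = 0$ and $A_t$ regular for $t \neq 0$. Since $A_0, B_0$ commute, they admit a common full invariant flag, so we may (after conjugating by a fixed $P \in GL_c$) assume both are upper triangular. Set $A_t := A_0 + tD$ for a diagonal matrix $D$ with distinct entries, so that $A_t$ is upper triangular with distinct diagonal entries — hence regular — for all small $t \neq 0$. One then seeks an upper triangular family $B_t = B_0 + tB_1 + t^2 B_2 + \cdots$ with $[A_t, B_t] = 0$ identically, which would place $(A_t, B_t) \in U$ for $t \neq 0$ and yield $(A_0,B_0) \in \overline{U}$.

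The main obstacle is solving this commutation equation order by order. At first order the constraint reads $[A_0, B_1] = [B_0, D]$, and the higher-order obstructions live in Koszul-type cohomology of the commuting pair $(A_0,B_0)$. Vanishing of these obstructions is the crux; a natural way to handle it is induction on $c$, since a common eigenvector of $(A_0,B_0)$ provides a common invariant hyperplane, reducing the problem to the analogous statement on a $(c-1)$-dimensional quotient (with the remaining off-diagonal entries satisfying a linear condition). Alternatively, one can sidestep the explicit deformation by combining (i) the dimension equality $\dim \mathcal{C}(2,c) = c^2 + c$, obtained from upper semicontinuity of $A \mapsto \dim C(A)$ and the fact that the regular locus in $\End(V)$ is open dense, with (ii) a verification that every irreducible component of $\mathcal{C}(2,c)$ meets $U$; the latter can be arranged via the incidence fibration $\{(A,B,F) : F \text{ a common invariant flag}\} \to \mathrm{Flag}(V)$, whose fiber is a variety of commuting upper-triangular pairs treated inductively. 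Either route forces $\overline{U}$ to be the unique irreducible component of $\mathcal{C}(2,c)$, completing the proof.
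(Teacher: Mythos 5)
The paper offers no proof of this statement: it is quoted from the literature (Gerstenhaber, Motzkin--Taussky, with Guralnick's short proof and \cite[Theorem 7.6.1]{OCV} as references). Your first step coincides with the opening move of all of those proofs: the locus $U$ of commuting pairs whose first member is nonderogatory is open, fibers over the irreducible open set of regular matrices with fiber $C(A)=\mathbb{C}[A]\cong\mathbb{C}^{c}$, hence is irreducible of dimension $c^{2}+c$, and the entire content of the theorem is the density $\overline{U}=\mathcal{C}(2,c)$. That part of your write-up is correct.

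The gap is that the density step is never actually proved, and the specific deformation you propose fails. Take $c=2$, $A_{0}=0$, $B_{0}=E_{12}$ (already upper triangular), $D=\mathrm{diag}(d_{1},d_{2})$ with $d_{1}\neq d_{2}$. Then $A_{t}=tD$, and for $t\neq0$ any $B_{t}$ commuting with $A_{t}$ must be diagonal, so no family $B_{t}=B_{0}+tB_{1}+\cdots$ with $[A_{t},B_{t}]=0$ and $B_{t}\to E_{12}$ can exist; equivalently, your first-order equation $[A_{0},B_{1}]=[B_{0},D]$ is unsolvable because the left-hand side vanishes identically while the right-hand side does not. So the obstructions you hope to kill are genuinely nonzero for the ansatz ``perturb $A$ in a fixed diagonal direction'': the perturbation of $A$ must be adapted to the pair (in the example one may take $A'=\delta^{2}D+\delta E_{12}$ and $B'=\delta D+E_{12}$, which lies in $U$ and tends to $(0,E_{12})$). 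Your alternative route is also only a sketch: the equality $\dim\mathcal{C}(2,c)=c^{2}+c$ does not by itself prevent a lower-dimensional component disjoint from $U$, and the irreducibility of the variety of commuting upper-triangular pairs, which your incidence fibration requires, is essentially as hard as the theorem itself. The classical proofs close exactly this gap by decomposing $V$ into the joint generalized eigenspaces of the commutative algebra $\mathbb{C}[A,B]$, reducing to a pair of commuting nilpotents plus scalars on each summand, treating that local case by an explicit adapted perturbation or induction, and reassembling with the approximating regular matrices chosen to have disjoint spectra on the different summands. Without carrying out one of these arguments, your proof is incomplete.
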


\begin{theorem}\label{Meara}
    $\mathcal{C}(3,c)$ is irreducible for $c\leq 10$ and reducible for $c\geq 29.$
\end{theorem}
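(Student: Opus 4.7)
The plan is to assemble the two halves of the theorem from the existing literature on the variety of commuting triples of matrices, since a unified self-contained argument is not known; in fact the substantial gap between the irreducibility range ($c\leq 10$) and the reducibility range ($c\geq 29$) is still an open problem.

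For the irreducibility statement, the strategy is to identify $\mathcal{C}(3,c)$ with the closure $\overline{\mathcal{C}_{\mathrm{gen}}(3,c)}$ of the open locus where $B_0$ has $c$ distinct eigenvalues. On this locus, $B_1$ and $B_2$ are forced to be polynomials in $B_0$, so $\mathcal{C}_{\mathrm{gen}}(3,c)$ is irreducible of dimension $c^2+2c$; it then suffices to show that every commuting triple deforms to one in this locus. First I would dispose of the elementary case $c\leq 3$ by direct computation on Jordan blocks; next I would invoke Gerstenhaber's classical argument \cite{G} to cover the low range, and finally the successive refinements of Holbrook--Omladi\v{c} \cite{HO} and \v{S}ivi\'c \cite{S} (cf.\ also \cite{OCV}) to reach $c=10$. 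In each step, the technical core is a sharp centralizer-dimension estimate for a pair $(B_0,B_1)$ that enables the required perturbation argument to produce a nearby triple with $B_0$ regular semisimple.

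For reducibility when $c\geq 29$, the plan is to exhibit an explicit commuting triple $(B_0,B_1,B_2)$ whose centralizer in $\End(V)^{\oplus 3}$ has dimension strictly larger than $c^2+2c$. Such a triple cannot lie in $\overline{\mathcal{C}_{\mathrm{gen}}(3,c)}$ and therefore its orbit closure is a separate irreducible component. The construction of Holbrook--Omladi\v{c} \cite{HO}, assembled from commuting nilpotent blocks on carefully chosen invariant subspaces, yields a triple realizing this estimate as soon as $c\geq 29$, and I would simply reproduce (or cite) that example and verify by a direct tangent-space computation that its local dimension in $\mathcal{C}(3,c)$ exceeds $c^2+2c$.

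The main obstacle is the upper end of the irreducibility range, namely $c=9$ and $c=10$: the case analysis on possible Jordan types and invariant factors becomes very delicate, and the dimension estimates of the local flattening strata require inputs that are essentially sharp. This is exactly why the proofs in the literature for these two values rely on computer-assisted verification and several independent contributions, and why a clean conceptual treatment remains unavailable.
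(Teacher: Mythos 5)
Your overall strategy coincides with the paper's: Theorem \ref{Meara} is not proved from scratch but assembled from the literature on commuting triples, with irreducibility amounting to the statement that $\mathcal{C}(3,c)$ is the closure of the locus where one matrix is regular semisimple (a component of dimension $c^2+2c$), and reducibility amounting to the exhibition of a locus of dimension exceeding $c^2+2c$. However, your assembly has two concrete gaps. First, for the reducibility half you attribute the bound $c\geq 29$ to the construction of Holbrook--Omladi\v{c} \cite{HO}; their example only yields reducibility for $c>29$, i.e.\ $c\geq 30$, and the boundary case $c=29$ requires the sharpened example recorded in \cite[Proposition 7.9.3]{OCV}, which is exactly why the paper cites that proposition separately. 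Reproducing the \cite{HO} example verbatim would therefore leave $c=29$ unproved. Relatedly, the mechanism you describe --- a triple with large centralizer cannot lie in the closure of the generic locus --- is not a valid inference, and a tangent-space computation only bounds the local dimension from above; what the cited proofs actually do is produce an irreducible subvariety of $\mathcal{C}(3,c)$ of dimension strictly greater than $c^2+2c$, which consequently cannot be contained in the $(c^2+2c)$-dimensional generic component.

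Second, for the irreducibility half your citation chain (Gerstenhaber \cite{G}, Holbrook--Omladi\v{c} \cite{HO}, \v{S}ivic \cite{S}) does not cover the intermediate cases: \cite{HO} reaches only $c<6$, while \cite{S} treats $c=9,10$ assuming the earlier results; the cases $c=6$, $c=7$ and $c=8$ are due respectively to Omladi\v{c} \cite{Omladic}, Han \cite{Yongho} and \v{S}ivic \cite{S2}, and must be invoked explicitly, as the paper does. These are citation-level rather than conceptual defects, but since the entire proof consists of citations, they are the substance of the argument and need to be repaired.
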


\begin{theorem}\label{Gersten}
 For $n\ge4$, $\mathcal{C}(n,c)$ is irreducible if and only if $c\le3.$   
\end{theorem}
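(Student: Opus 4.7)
The plan is to prove both directions of the biconditional separately, with the hypothesis $n \geq 4$ fixed throughout.

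\textbf{Step 1 (irreducibility for $c \leq 3$).} I would first define the \emph{principal subset} $\mathcal{C}^{\mathrm{pc}}(n,c) \subset \mathcal{C}(n,c)$ consisting of simultaneously diagonalizable $n$-tuples; this is the image of the irreducible variety $GL_c \times \mathbb{C}^{nc}$ under simultaneous conjugation of diagonal tuples, so its closure is irreducible. On the Zariski open locus where some $B_i$ has $c$ distinct eigenvalues, the commutant $\mathbb{C}[B_i]$ already contains every $B_j$, so the tuple is simultaneously diagonalizable and hence lies in $\mathcal{C}^{\mathrm{pc}}(n,c)$. It remains to show density of this regular locus inside $\mathcal{C}(n,c)$, which is where the hypothesis $c \leq 3$ enters essentially: classical results of Motzkin--Taussky and Gerstenhaber guarantee that any commuting $n$-tuple in $M_c(\mathbb{C})$ with $c \leq 3$ generates a commutative subalgebra of dimension at most $c$, and this bound allows a small deformation of an arbitrary commuting tuple into one containing a regular element.

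\textbf{Step 2 (reducibility for $c \geq 4$).} I would first compute $\dim \mathcal{C}^{\mathrm{pc}}(n,c) = c^2 + (n-1)c$, as the dimension of the $GL_c$-orbit of a regular diagonal tuple (which is $c^2-c$) plus the $nc$ diagonal eigenvalue parameters. Then I would construct a second irreducible subvariety $Z \subseteq \mathcal{C}(n,c)$ as the $GL_c$-orbit closure of $A^n$, where $A \subset M_c(\mathbb{C})$ is a commutative subalgebra of dimension strictly greater than $c$. For every $c \geq 4$ such an $A$ exists since the Schur bound $\lfloor c^2/4 \rfloor + 1$ exceeds $c$, with concrete models given by scalar-plus-nilpotent block algebras in $M_c$. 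A standard dimension count gives $\dim Z \geq n \dim A + \dim GL_c - \dim \mathrm{Stab}(A)$, and for $n \geq 4$ and suitable $A$ this strictly exceeds $c^2+(n-1)c$. Since a closed irreducible variety cannot contain a strictly higher-dimensional irreducible subvariety, $Z$ is not contained in $\mathcal{C}^{\mathrm{pc}}(n,c)$, so $\mathcal{C}(n,c)$ is reducible.

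\textbf{Main obstacle.} The hardest part is the dimension comparison in Step 2 at the boundary case $n=c=4$, where the numerical margins are tight and one must select $A$ optimally while computing its simultaneous-conjugation stabilizer precisely; a too-generic $A$ has a small stabilizer but not enough extra dimension, whereas a too-special $A$ has too large a stabilizer. A secondary delicate point is the density of the regular locus in Step 1 for $c=3$, which already requires the full Motzkin--Taussky--Gerstenhaber analysis; the argument breaks down transparently for $c \geq 4$ because maximal commutative subalgebras then exceed dimension $c$, which is precisely the mechanism exploited in Step 2.
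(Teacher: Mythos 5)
The paper itself does not prove this theorem: it is imported from the literature, with the ``if'' part attributed to \cite[Proposition 7.6.8]{OCV} and the ``only if'' part to \cite[Corollary 7.6.7]{OCV}, so your attempt must be judged against those arguments. Your Step 1 is the right outline for $c\le 3$ (Schur's bound $\lfloor c^2/4\rfloor+1$ equals $c$ exactly for $c\le3$, and one shows the locus of tuples containing a nonderogatory matrix is dense), but be aware that the sentence ``this bound allows a small deformation of an arbitrary commuting tuple into one containing a regular element'' is the entire content of the hard direction for $c=3$: it is not a formal consequence of the bound and requires splitting into generalized eigenspaces and a case analysis of the commutative subalgebras of $M_3(\C)$.

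The genuine gap is in Step 2: the dimension comparison fails precisely at the cases that matter. Take $n=c=4$ and the five-dimensional commutative subalgebra $A=\C I\oplus\langle E_{13},E_{14},E_{23},E_{24}\rangle\subset M_4(\C)$ (the unique one, up to conjugacy, exceeding dimension $4$). Its normalizer in $GL_4$ is the $(2,2)$ block upper-triangular parabolic, of dimension $12$, so $\dim\overline{GL_4\cdot A^n}=16+5n-12=5n+4$, whereas the principal locus has dimension $c^2+(n-1)c=4n+12$. The inequality $5n+4>4n+12$ holds only for $n\ge9$; at $n=4$ your $Z$ has dimension $24<28$, so it could perfectly well sit inside the principal component as far as dimensions are concerned, and no other choice of $A$ can help since $5$ is the maximal possible dimension. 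The actual mechanism is not a dimension count. The function $(B_0,\dots,B_{n-1})\mapsto\dim\C[I,B_0,\dots,B_{n-1}]$ is lower semicontinuous, so the closure of the simultaneously diagonalizable locus lies in the closed set of tuples whose generated unital algebra has dimension at most $c$; the tuple $(E_{13},E_{14},E_{23},E_{24})$, padded by a generic diagonal block when $c>4$, generates a unital commutative algebra of dimension $c+1$ and therefore lies outside that closed set. Since the simultaneously diagonalizable tuples contain a nonempty Zariski-open subset of $\mathcal{C}(n,c)$, irreducibility would force them to be dense, a contradiction; this is also exactly where $n\ge4$ is used, as four matrices are needed to span the nilpotent radical of $A$. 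Finally, your proposal never addresses the propagation to all $c\ge4$ beyond invoking ``scalar-plus-nilpotent block algebras''; the padded example above (or a reduction of the form ``$\mathcal{C}(n,c)$ reducible implies $\mathcal{C}(n,c+1)$ reducible'') is needed to close that loop.
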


\vspace{0.2cm}

Theorem \ref{Mtz-Tau} is due Gerstenhaber \cite[II Theorem 1]{G} and Motzkin and Taussky \cite{MT}. Later Guralnick also gave a short proof of this result in \cite{gural}. One can also see \cite[Theorem 7.6.1]{OCV} for the proof.

Theorem \ref{Meara} is a combination of various papers concerning irreducibility of $\mathcal{C}(3,c).$ In \cite{HO}, Holbrook and Omaldi\v{c} prove that for $c<6,$ $\mathcal{C}(3,c)$ is irreducible while for $c>29$ it is reducible. Then Omaldi\v{c} proved in \cite{Omladic} that $\mathcal{C}(3,c)$ is irreducible for $c=6$, Han proved in \cite{Yongho} that $\mathcal{C}(3,7)$ is irreducible and \v{S}ivic in \cite{S2} showed that $\mathcal{C}(3,8)$ is irreducible. Finally, in \cite{S}, \v{S}ivic also proved that $\mathcal{C}(3,c),$ for $c=9,10.$ In \cite[Proposition 7.9.3]{OCV} one can find a proof for $c\geq29$. 

Theorem \ref{Gersten} is the combination of \cite[Proposition 7.6.8]{OCV}, for the ``if" part and \cite[Corollary 7.6.7]{OCV}, for the ``only if" part.

\bigskip

Now define the affine space $\mathbb{B}:=\End(V)^{\oplus n}\oplus V^{\oplus r}$ whose points are represented by the $(n+r)$-tuple $X=(B_{0},B_{1},\ldots,B_{n-1},v_{1},\cdots,v_{r})$ that will be called an \emph{ ADHM datum}. We then define the \emph{variety of ADHM data} $\mathcal{U}(n,c,r)$ as the subvariety of $\mathbb{B}$ given by
$$ \mathcal{U}(n,c,r):= \mathcal{C}(n,c)\times V^{\oplus r}. $$

\begin{definition}\label{stability} 
An ADHM datum $X=(B_{0},B_{1},\ldots,B_{n-1},v_{1},\cdots,v_{r})\in\mathbb{B}$ is said to be \emph{stable} if there is no proper subspace $S \subsetneq V$ such that 
$$ B_{0}(S),B_{1}(S),\cdots,B_{n-1}(S)\subseteq S,$$ and $S$ contains all the vectors $v_{1},\cdots,v_{r}.$
\end{definition}

The set of stable points in $\mathbb{B}$ will be denoted by $\mathbb{B}^{st}$; $\mathcal{U}(n,c,r)^{st}:= \mathbb{B}^{st}\cap\mathcal{U}(n,c,r)$ will denote the set of stable points in $\mathcal{U}(n,c,r).$

\bigskip

Next, we introduce the action of the linear group $G:=GL(V)$ on $\mathbb{B}$. For all $g\in G$ and $X=(B_{0},B_{1},\ldots,B_{n-1},v_{1},\cdots,v_{r})\in \mathbb{B},$ this action is given by
$$ g\cdot(B_{0},B_{1},\ldots,B_{n-1},v_{1},\cdots,v_{r}) = (gB_{0}g^{-1},\ldots,gB_{n-1}g^{-1},gv_{1},\cdots,gv_{r}). $$
For a fixed ADHM datum $X,$ we will denote by $G_X$ its stabilizer subgroup:
$$ G_X:=\{g \in G \,|\, gX = X\}\subseteq G. $$
It is easy to see that $X$ is stable if and only if $gX$ is stable, and that $G$ acts on $\mathcal{U}(n,c,r)$.

We conclude this section with two results relating stability in the sense of Definition \ref{stability} with GIT stability, following the construction in \cite[\S2]{K}.

\begin{proposition}\label{prop4}
If $X\in\mathcal{U}(n,c,r)^{st}$, then its stabilizer subgroup $G_{X}$ is trivial.
\end{proposition}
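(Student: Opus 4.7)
The plan is to exploit the stability condition by producing an explicit $(B_i)$-invariant subspace of $V$ containing all the framing vectors $v_j$ out of any stabilizer element.

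Concretely, suppose $g \in G_X$, so that $g B_i g^{-1} = B_i$ for all $i = 0, \ldots, n-1$ (equivalently, $g$ commutes with each $B_i$) and $g v_j = v_j$ for all $j = 1, \ldots, r$. I would set
\[
S := \ker(g - \id_V) \subseteq V,
\]
the fixed subspace of $g$. The conditions $g v_j = v_j$ immediately give $v_1, \ldots, v_r \in S$. Moreover, if $w \in S$ then $g(B_i w) = B_i(gw) = B_i w$ since $g$ and $B_i$ commute, so $B_i w \in S$; hence $S$ is $B_i$-invariant for every $i$.

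Now apply the stability hypothesis on $X$ (Definition \ref{stability}): the only subspace of $V$ which is simultaneously $B_0, \ldots, B_{n-1}$-invariant and contains all the $v_j$ is $V$ itself. Therefore $S = V$, which is to say $g - \id_V$ vanishes on all of $V$, and thus $g = \id_V$. This shows $G_X = \{\id_V\}$.

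The argument is essentially a one-step linear algebra observation; there is no genuine obstacle, because stability was defined precisely so as to rule out proper invariant subspaces of this sort. The only point requiring a brief justification is that $\ker(g - \id_V)$ is indeed a linear subspace (automatic) and that it is stable under the $B_i$ (immediate from the commutation $g B_i = B_i g$), both of which are transparent.
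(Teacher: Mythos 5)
Your proof is correct and is essentially identical to the paper's own argument: both take $S=\ker(g-\id_V)$, observe it is $B_i$-invariant and contains the $v_j$, and conclude $S=V$ by stability. Nothing further is needed.
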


\begin{proof}
Let $X=(B_{0},\ldots,B_{n-1},v_{1},\cdots,v_{r})$ be a stable ADHM datum and suppose that there exists an element $g\neq\mathbf 1$ in $G$ such that $gv_{j}=v_{j},$ for $j\in\{1,\cdots,r\}$  and $gB_{i}g^{-1}=B_{i}$ for all $i\in\{0, \ldots,n-1\}.$ Then  $\ker(g-\mathbf1)$ is $B_{i}$-invariant, for all $i\in\{0, \ldots,n-1\},$ and contains all $v_{i}'$s. Since $X$ is stable, then $\ker(g-\mathbf 1)\subset V$ must be equal to $V.$ Hence $g$ must be the identity.
\end{proof}

Let $\Gamma(\mathcal{U}(n,c,r))$ be the ring of regular functions on $\mathcal{U}(n,c,r).$ Fix $l>0,$ and consider the group homomorphism $\chi:G\to\mathbb{C}^{\ast}$ given by $\chi(g)=(\det g)^{l}.$ This can be used for the of construction a suitable linearization of the $G$-action on $\mathcal{U}(n,c,r),$ that is, to lift the action of
$G$ on $\mathcal{U}(n,c,r)$ to an action on $\mathcal{U}(n,c,r)\times\mathbb{C}$ as follows:
$g\cdot(X,z):=(g\cdot X,\chi(g)^{-1}z)$ for any ADHM datum $X\in\mathcal{U}(n,c,r)$ and $z\in\mathbb{C}.$ Then one can form the scheme
$$\mathcal{U}(n,c,r) /\!/_{\chi}G :=
{\rm Proj}\left(\bigoplus_{i\geq0}\Gamma(\mathcal{U}(n,c,r))^{G,\chi^{i}} \right) $$
where
$$\Gamma(\mathcal{U}(n,c,r))^{G,\chi^{i}}:=\left\{ f\in\Gamma(\mathcal{U}(n,c,r)) ~|~
f(g\cdot X)=\chi(g)^{-1}\cdot f(X),\hspace{0.2cm}\forall g\in G\right\}. $$
The scheme $\mathcal{U}(n,c,r)/\!/_{\chi}G$ is projective over the ring $\Gamma(\mathcal{U}(n,c,r))^{G}$ and quasi-projective over
$\mathbb{C}.$

\begin{proposition}\label{closed orbit}
The orbit $G\cdot(X,z)$ is closed, for $z\neq0$, if and only if  the
ADHM datum $X\in\mathcal{U}(n,c,r)$ is stable.
\end{proposition}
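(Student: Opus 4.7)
The plan is to apply the standard Hilbert--Mumford numerical criterion for closedness of orbits, in the spirit of \cite[\S2]{K}: since $G=GL(V)$ is reductive and $\mathcal{U}(n,c,r)\times\C$ is affine, the orbit $G\cdot(X,z)$ is closed if and only if every one--parameter subgroup $\lambda\colon\C^{\ast}\to G$ for which $\lim_{t\to 0}\lambda(t)\cdot(X,z)$ exists in $\mathcal{U}(n,c,r)\times\C$ has its limit already inside $G\cdot(X,z)$. I would fix such a $\lambda$ and work with the induced weight decomposition $V=\bigoplus_{w\in\Z}V_{w}$. Writing each $B_{i}$ in block form with respect to this decomposition, existence of the limit is equivalent to three conditions: $(i)$ the block $(B_{i})_{w'w}\colon V_{w}\to V_{w'}$ vanishes whenever $w'<w$, so in particular the subspace $S:=\bigoplus_{w\geq 0}V_{w}$ is $B_{i}$--invariant for every $i$; $(ii)$ each $v_{k}$ lies in $S$; and $(iii)$ $\sum_{w}w\dim V_{w}\leq 0$, with equality exactly when the $\C$--factor of the limit is nonzero, because $\chi(\lambda(t))^{-1}z=t^{-l\sum_{w}w\dim V_{w}}z$.

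For the sufficiency direction, I would assume $X$ is stable. Then $(i)$ and $(ii)$ exhibit $S$ as a $B_{i}$--invariant subspace containing every $v_{k}$, so stability forces $S=V$, i.e.\ every weight occurring in the decomposition satisfies $w\geq 0$. Combined with $(iii)$ this pins every weight to $0$, so $\lambda$ acts trivially on $V$ and the limit coincides with $(X,z)\in G\cdot(X,z)$. For the converse, suppose $X$ is not stable and pick a proper $B_{i}$--invariant subspace $S\subsetneq V$ containing every $v_{k}$. If some $v_{k}\neq 0$, then $\dim S,\dim T\geq 1$ for any linear complement $V=S\oplus T$, and the choice $\lambda(t)|_{S}=t^{\dim T}\,\mathbf{1}$, $\lambda(t)|_{T}=t^{-\dim S}\,\mathbf{1}$ satisfies $\det\lambda(t)\equiv 1$, fixes $z$, makes $(i)$ automatic from the $B_{i}$--invariance of $S$, and sends each $v_{k}$ to $t^{\dim T}v_{k}\to 0$; the resulting limit then has all $v$--components equal to $0$ and cannot be of the form $g\cdot(X,z)$. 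In the degenerate subcase $v_{1}=\cdots=v_{r}=0$, the scalar subgroup $\lambda(t)=t^{-1}\,\mathbf{1}$ fixes the $B_{i}$'s but drives the $z$--coordinate $t^{lc}z$ to $0$, so the limit again escapes the orbit. Either way the orbit fails to be closed, and the equivalence follows.

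The principal obstacle I anticipate is purely a matter of sign bookkeeping: the character $\chi(g)=(\det g)^{l}$ enters the linearization as $g\cdot z=\chi(g)^{-1}z$, and the destabilizing one--parameter subgroup has to be calibrated so that $(i)$, $(ii)$ and $(iii)$ hold simultaneously while the limit visibly lies outside the orbit. Once the weights on $S$ and $T$ are balanced so that the total weight vanishes, the block--matrix computation becomes routine, and the only genuine mathematical input is the observation that $B_{i}$--invariance of $S$ makes the off--diagonal block \emph{from} $S$ \emph{to} $T$ automatically zero.
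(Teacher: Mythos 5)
Your argument is correct, and it is essentially the proof the paper intends: the paper itself gives no details, deferring to \cite[Proposition 2.10]{HJM}, which is exactly this King-style analysis via one-parameter subgroups, the induced weight decomposition of $V$, and the sign of $\sum_w w\dim V_w$ coming from the character $(\det g)^{l}$. Both directions of your proof (triviality of any destabilizing $\lambda$ when $X$ is stable, and the explicit $\det=1$ subgroup contracting the $v_k$'s, plus the scalar subgroup killing $z$ when all $v_k=0$, when $X$ is unstable) are complete and correct.
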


\begin{proof}
The proof is similar to \cite[Proposition 2.10]{HJM}.
\end{proof}

From Propositions \ref{prop4}, \ref{closed orbit} and  of \cite[Theorem 1.10]{mumford}, we conclude that the quotient space $\mathcal{M}(n,c,r):=\mathcal{U}(n,c,r) /\!/_{\chi}G$ is a good categorical quotient. Moreover, we conclude from Proposition \ref{closed orbit}, that 
$$ \mathcal{M}(n,c,r) = \mathcal{U}(n,c,r)^{\rm st} / G ,$$ since the GIT quotient $\mathcal{M}(n,c,r)$ is the space of orbits $G\cdot X\subset \mathcal{U}(n,c,r)$ such that the lifted
orbit $G\cdot(X,z)$ is closed within $\mathcal{U}(n,c,r)\times\mathbb{C}$ for all $z\neq0.$


\vspace{1cm}

In this work we consider the Quot scheme $\Quot_{\mathbb{C}^{n}}(c,r)$ classifying quotients $[\mathbb{C}[z_{0},\cdots,z_{n-1}]^{\oplus r}\twoheadrightarrow V],$ where $\mathbb{C}[z_{0},\cdots,z_{n-1}]$ is the polynomial ring in $n$ indeterminates and $V$ is an $n-$dimensional complex vector space.

The existence of its schematic structure is a special case of the general result of Grothendieck \cite{Groth1}. The reader may also consult \cite{Nitsure} for more general results and examples.
An explicit construction of the punctual Quot scheme on the affine plane is given by Baranovsky \cite[Appendix A]{Bara}.
\bigskip

We conclude this section by stating the following:

\begin{theorem}\label{Corresp}
There exists a set-theoretical bijection between the quotient space $\mathcal{M}(n,c,r)$ and the Quot scheme $\Quot_{\mathbb{C}^{n}}(c,r).$
\end{theorem}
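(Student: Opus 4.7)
The plan is to exhibit mutually inverse maps at the level of sets, descend the correspondence to the $G$-quotient, and use the stability condition from Definition \ref{stability} to match surjectivity of the quotient map with stability of the ADHM datum.

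\textbf{Forward map.} Starting from a class $[q:\mathbb{C}[z_0,\ldots,z_{n-1}]^{\oplus r}\twoheadrightarrow V]$ in $\Quot_{\mathbb{C}^{n}}(c,r)$, I would equip $V$ with its induced $\mathbb{C}[z_0,\ldots,z_{n-1}]$-module structure. Multiplication by the coordinate $z_i$ becomes an endomorphism $B_i\in\End(V)$, and these automatically pairwise commute, so $(B_0,\ldots,B_{n-1})\in\mathcal{C}(n,c)$. Letting $e_1,\ldots,e_r$ denote the standard generators of the free module on the left, set $v_j:=q(e_j)\in V$. This produces a datum $X=(B_0,\ldots,B_{n-1},v_1,\ldots,v_r)\in\mathcal{U}(n,c,r)$. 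Any invariant subspace $S\subseteq V$ containing all $v_j$ is a submodule of $V$ containing the images of the generators, hence equals $V$ by surjectivity of $q$; this is precisely stability in the sense of Definition \ref{stability}. A change of basis on $V$ conjugates the $B_i$'s and transforms the $v_j$'s by the same matrix, so the construction is well defined on $\Quot_{\mathbb{C}^{n}}(c,r)$ up to the $G=GL(V)$-action, giving a set-map $\Phi:\Quot_{\mathbb{C}^{n}}(c,r)\to \mathcal{U}(n,c,r)^{\rm st}/G=\mathcal{M}(n,c,r)$.

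\textbf{Inverse map.} Given $X=(B_0,\ldots,B_{n-1},v_1,\ldots,v_r)\in\mathcal{U}(n,c,r)^{\rm st}$, I would define
\[
\phi_X\colon \mathbb{C}[z_0,\ldots,z_{n-1}]^{\oplus r}\longrightarrow V,\qquad (p_1,\ldots,p_r)\longmapsto \sum_{j=1}^{r}p_j(B_0,\ldots,B_{n-1})\,v_j.
\]
Well-definedness uses that the $B_i$ pairwise commute, so that substituting commuting matrices into a polynomial is unambiguous. The image of $\phi_X$ is a $B_i$-invariant subspace of $V$ containing $v_1,\ldots,v_r$, hence equals $V$ by stability, so $\phi_X$ is surjective. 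The resulting quotient class depends only on the $G$-orbit of $X$, since conjugating by $g$ on the target corresponds to an automorphism of $V$ identifying the two quotients. This yields a set-map $\Psi:\mathcal{M}(n,c,r)\to \Quot_{\mathbb{C}^{n}}(c,r)$.

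\textbf{Bijection.} The two constructions are inverse at the set level: starting from a quotient $q$, forming $X$, and then building $\phi_X$, one checks that $\phi_X$ and $q$ have the same kernel, because both send $z_0^{a_0}\cdots z_{n-1}^{a_{n-1}}e_j$ to $B_0^{a_0}\cdots B_{n-1}^{a_{n-1}}v_j$; and going the other way around reproduces the $B_i$'s and $v_j$'s up to the chosen basis, i.e.\ up to $G$. The final ingredient is that if $\phi_X$ and $\phi_{X'}$ have the same kernel, then the induced isomorphism of quotients supplies an element $g\in G$ with $gX=X'$, which rests on Proposition \ref{prop4} to rule out nontrivial stabilizers obstructing uniqueness.

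\textbf{Main obstacle.} None of the steps is individually deep, but the point that needs care is the compatibility of the equivalence relations on both sides: two quotient maps represent the same point of the Quot scheme precisely when their kernels agree, and two stable data represent the same point of $\mathcal{M}(n,c,r)$ precisely when they are $G$-conjugate. Matching these two equivalences cleanly (using stability to ensure $\phi_X$ is surjective, and Proposition \ref{prop4} to ensure that the $g$ realizing the isomorphism is unique) is the crux, and is exactly the input that makes $\Phi$ and $\Psi$ well-defined inverses at the level of points.
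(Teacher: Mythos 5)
Your proposal is correct and follows essentially the same route as the paper: both directions are given by exactly the constructions the paper uses ($\Phi_X$ from a stable datum, and the induced $z_i$-action plus images of the generators from a surjection), with stability matching surjectivity via the same invariant-subspace argument (the paper's Proposition \ref{stab-crit}). The only difference is that the paper leaves the verification that the two maps are mutually inverse "as an exercise," whereas you spell out the matching of the equivalence relations on both sides.
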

\begin{proof}
This is achieved by constructing, for every stable ADHM datum
$$ X=(B_{0},\ldots,B_{n-0},v_{1},\cdots,v_{r})\in\mathcal{U}(n,c,r)^{\rm st}, $$
the surjective map:
\begin{center}
$\begin{array}{cccl} \Phi_{X}: & \mathbb{C}\left[z_{0},\ldots,z_{n-1}\right]^{\oplus r} & \longrightarrow & \qquad V \\ & (p_{j}(z_{0},\ldots,z_{n-1}))_{j\in\{1,\cdots,r\}} & \longmapsto &  \sum_{j=1}^{r}p_{j}(B_{0},\ldots,B_{n-1})v_{j} \end{array}.$
\end{center}
Where $v_{j}$ is just the image of the unit element in the $j-$th copy of $\mathbb{C}\left[z_{0},\ldots,z_{n-1}\right],$ i. e., $\Phi_{X}(0,\cdots,1,\cdots, 0)=:v_{j}.$ Moreover, the map $\Phi_{X}$ is clearly surjective, otherwise there is a proper subspace $S,$ of $V,$ which is $B_{i}-$invariant for all $i\in\{0,\cdots,n-1\}$ and containing all of the vectors $v_{1},\cdots,v_{r}.$

\vspace{0.5cm}

On the other hand, given a surjective map $\Phi: \mathbb{C}\left[z_{0},\ldots,z_{n-1}\right]^{\oplus r}\longrightarrow V,$ the natural action of the coordinate $z_{i}$ on the free module $\mathbb{C}\left[z_{0},\ldots,z_{n-1}\right]^{\oplus r}$ induces an action on the vector space $V$ by means of some endomorphism $B_{i}.$ We also define $v_{j}$ by $\Phi(0,\cdots,1,\cdots, 0).$ Thus, we obtained a datum $X_{\Phi}=(B_{0},\ldots,B_{n-1},v_{1},\cdots,v_{r}).$ Furthermore, since $\Phi$ is surjective, then $V$ is generated, as a $\mathbb{C}\left[z_{0},\ldots,z_{n-1}\right]-$module, by vectors of the form $\sum_{\alpha_{j,i}}a_{\alpha_{j,0}\cdot\alpha_{j,1}} B_{0}^{\alpha_{j,0}}\cdot B_{1}^{\alpha_{j,1}}\cdots B_{n-1}^{\alpha_{j,n-1}}\cdot v_{j},$ in other words $$\mathcal{A}:=\stackunder{\oplus}{\scriptstyle j=1,\cdots,r}< B_{0}^{\alpha_{j,0}}\cdot B_{1}^{\alpha_{j,1}}\cdots B_{n-1}^{\alpha_{j,n-1}}\cdot v_{j}\vert \alpha_{j,i}\in\mathbb{N}\cup\{0\}>$$ is the vector space $V.$ 

\begin{proposition}\label{stab-crit}
For every $X=(B_{0},B_{1},\ldots,B_{n-1},v_{1},\cdots,v_{r})$ one has
\begin{itemize}
\item[(i)] $\mathcal{A}\quad\subseteq\quad V.$ 
\item[(ii)] $\mathcal{A}=V$ if, and only if, $X$ is stable.
\end{itemize}
\end{proposition}

\begin{proof}
The first statement is obvious. For the second statement, we assume that $\mathcal{A}$ is a proper subspace of $V.$ Then one has $B_{i}(\mathcal{A})\subseteq \mathcal{A},$ $\forall i=0,\cdots, n-1$ and $\im(v_{i})\subseteq \mathcal{A}.$ Hence the datum is not stable. Finally, if $X$ is not stable. Let $S\subsetneq V$ the stabilizing subspace, i.e, $S$ is $B_{i}-$invariant, for all $i,$ and contains all $v_{j}.$ It follows that $\mathcal{A}\subseteq S\subsetneq V,$ Thus it cannot be $V.$  

\end{proof}

Thus, as one might expect, the datum $X_{\Phi}$ obtained from $\Phi: \mathbb{C}\left[z_{0},\ldots,z_{n-1}\right]^{\oplus r}\longrightarrow V,$ is stable. Showing the bijection is an exercise left to the reader.

\end{proof}


\section{Extended monads and perfect extended monads}\label{ext-monads}

This section is a reminder of some useful results from \cite[Section 3]{HJ}. The proofs will be omitted since they can be found in the above reference.

Let $X$ be a smooth projective algebraic variety of dimension $n$ over the field of complex numbers $\C$, and let $\mathcal{O}_{X}(1)$ be a polarization on it.


\subsection{$l$-extended monads}

\begin{definition}\label{l-ext}
An \emph{$l$-extended monad} over $X$ is a complex
\begin{equation}\label{premordial}
C^{\bullet}:\quad  0\to C^{-l-1}\stackrel{\alpha_{-l-1}}{\longrightarrow} C^{-l}\stackrel{\alpha_{-l}}{\longrightarrow}\cdots\stackrel{\alpha_{-2}}{\longrightarrow} C^{-1}\stackrel{\alpha_{-1}}{\longrightarrow} C^{0}\stackrel{\alpha_{0}}{\longrightarrow} C^{1}\to0
\end{equation}
of locally free sheaves over $X$ which is exact at all but the $0$-th position, i.e. $\mathcal{H}^{i}(C^{\bullet})=\ker\alpha_i/\im\alpha_{i-1}=0$ for $i\ne0$. 
The coherent sheaf $\mathcal{E}:=\mathcal{H}^{0}(C^{\bullet})=\ker \alpha_{0}/\im \alpha_{-1}$ will be called \emph{the cohomology of $C^{\bullet}$}.
 \end{definition}
 
Note that a monad on $X$, in the usual sense, is just a $0$-extended monad.

Moreover, one can associate to any $l$-extended monad $C^{\bullet}$ a \emph{display} of exact sequences as the following
\begin{equation}\label{ext-display}
\xymatrix@R-2pc@C-0.5pc@u{& 0\ar[d] & 0\ar[d] &  & \\
& C^{-l-1}\ar[d]_{\alpha_{-l-1}}\ar@{=}[r] & C^{-l-1}~~~~~~\ar[d]^{\alpha_{-l-1}} & & \\
& C^{-l}\ar@{=}[r]\ar[d]_{\alpha_{-l}} & C^{-l}\ar[d]^{\alpha_{-l}} & & \\
& \vdots\ar[d]_{\alpha_{-2}} & \vdots\ar[d]^{\alpha_{-2}} & & \\
& C^{-1}\ar[d]\ar@{=}[r] & C^{-1}\ar[d]^{\alpha_{-1}} & & \\
0\ar[r]& K\ar[d]\ar[r] & C^{0}\ar[d]\ar[r]^{\alpha_{0}} & C^{1}\ar@{=}[d]\ar[r] &0 \\
0\ar[r]& \mathcal{E}\ar[d]\ar[r] & Q\ar[d]\ar[r] & C^{1}\ar[r] & 0\\
& 0& 0 &  &
}\end{equation}
where $K:=\ker\alpha_{0}$ and $Q:=\coker\alpha_{-1}$.

An $l$-extended monad can be broken into the following two complexes: first,
\begin{equation}\label{Resol}
\xymatrix@C-0.5pc{ N^{\bullet}:&0\ar[r]&C^{-l-1}~~~~~~\ar[r]^{\alpha_{-l-1}}&C^{-l}&\cdots\ar[r]^{\alpha_{-3}}&C^{-2}\ar[r]^{\alpha_{-2}} & C^{-1}\ar[r]^{J_{-1}}&\mathcal{G}\ar[r]&0
} \end{equation}
which is exact, and a locally free resolution of the sheaf $\mathcal{G}=\coker\alpha_{-2},$ and
\begin{equation}\label{Monad-like}
\xymatrix@C-0.5pc{ M^{\bullet}:& \mathcal{G}\ar[r]^{I_{-1}} & C^{0}\ar[r]^{\alpha_{0}}&C^{1}
} \end{equation}
where $I_{-1}\circ J_{-1}=\alpha_{-1}$. $M^{\bullet}$ is a monad-like complex in which the coherent sheaf $\mathcal{G}$ might not be locally free; indeed, $\mathcal{G}$ is not locally free for the extended monads describing ideal sheaves of $0$-dimensional subschemes, the situation most relevant to the present paper. 

For a given $l$-extended monad, we refer to the complexes $M^{\bullet}$ and $N^{\bullet}$ as the \emph{associated resolution} and the \emph{associated monad}, respectively.

\begin{definition}
A \emph{perfect extended monad} on a $n$-dimensional projective \linebreak variety $X$ is a $(n-2)$-extended monad $P^{\bullet}$ on $X$ of the following form
$$\xymatrix@C-0.8pc{ 0\ar[r]&\mathcal{O}_{X}(1-n)^{\oplus a_{1-n}}\ar[r]^{\alpha_{1-n}}&
\mathcal{O}_{X}(2-n)^{\oplus a_{2-n}}\ar[r]&}\hspace{1cm}$$
$$\hspace{3cm}\xymatrix@C-0.5pc{\cdots\ar[r]^{\alpha_{-2}\hspace{0.5cm}}&\mathcal{O}_{X}(-1)^{\oplus a_{-1}}\ar[r]^{\hspace{0.3cm}\alpha_{-1}} &\mathcal{O}_{X}^{\oplus a_{0}}\ar[r]^{\alpha_{0}\hspace{0.1cm}}&\mathcal{O}_{X}(1)^{\oplus a_{1}}\ar[r]&0
},$$ for some integers $a_{i},$ $1-n\leq i\leq 1.$ 
\end{definition}

We denote by $Kom^{\flat}(X)$ the category whose objects are bounded complexes of sheaves over the projective scheme $X$ and whose morphisms are degree zero morphisms of complexes. For standard results about this category one might consult \cite[Chapter ${\rm III}, \S 1.$]{GM}

We recall to the reader that a projective scheme $X$ is \emph{arithmetically Cohen--Macaulay}, or simply \emph{ACM}, if its homogeneous coordinate ring is Cohen--Macaulay ring. Moreover let us denote by $\mathfrak{P}er$ the full subcategory of $Kom^{\flat}(X)$ consisting of perfect extended monads. In this case, the cohomology functor
$$ H:\mathfrak{P}er(X)\to{\rm Coh}(X) $$
is full and faithful\cite[Corollary 3.5]{HJ}. For projective spaces, one can characterizes sheaves which are in the image of the functor $H$ $n\geq 2$ as in the following.

\begin{proposition}\label{hom-char}
If $\mathcal{E}$ is the cohomology of a perfect extended monad on $\pn$ ($n\ge 2$) then:
\begin{itemize}
\item[(i)] $\ho^{0}(\mathcal{E}(k))=0$ for $k<0$;
\item[(ii)] $\ho^{n}(\mathcal{E}(k))=0$ for $k>-n-1$;
\item[(iii)] $\ho^{i}(\mathcal{E}(k))=0$ $\forall k,$ $2\leq i\leq n-1$, when $n\ge3$.
\end{itemize}
\end{proposition}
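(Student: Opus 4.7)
The plan is to apply the hypercohomology spectral sequence to the twisted complex $C^\bullet(k)$, which has cohomology concentrated in degree $0$ equal to $\mathcal{E}(k)$. The first page is
\[
E_1^{p,q}\;=\;H^q(C^p(k))\;=\;H^q\bigl(\mathcal{O}_{\mathbb{P}^n}(p+k)\bigr)^{\oplus a_p}\;\Longrightarrow\; H^{p+q}(\mathcal{E}(k)),
\]
and by Bott vanishing on $\mathbb{P}^n$ the entries $E_1^{p,q}$ are zero unless $q\in\{0,n\}$. A routine inspection shows that, for $r\geq 2$, the only higher differential whose source and target lie in the rows $\{0,n\}$ is $d_{n+1}\colon E_{n+1}^{p,n}\to E_{n+1}^{p+n+1,0}$; but its source range $p\in\{1-n,\dots,1\}$ and its target range $p+n+1\in\{1-n,\dots,1\}$ (i.e.\ $p\leq -n$) are disjoint, so $E_\infty=E_2$.

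Parts (i) and (ii) then follow immediately. For $H^0(\mathcal{E}(k))$ the only potential graded piece is $E_\infty^{0,0}$ (the candidate $(p,q)=(-n,n)$ being out of range), which is a subquotient of $H^0(\mathcal{O}(k))^{a_0}=0$ when $k<0$; symmetrically, the only contribution to $H^n(\mathcal{E}(k))$ is $E_\infty^{0,n}\subseteq H^n(\mathcal{O}(k))^{a_0}=0$ when $k>-n-1$.

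For part (iii), with $2\leq i\leq n-1$ and $n\geq 3$, the sole possibly-nonzero graded piece of $H^i(\mathcal{E}(k))$ is $E_\infty^{i-n,n}$, and the spectral sequence alone cannot show it vanishes. Here one invokes the iterated long exact sequences from the display \eqref{ext-display}. The short exact sequence $0\to K\to C^0\to C^1\to 0$ (with $K$ locally free, since $\alpha_0$ is surjective between vector bundles) together with Bott vanishing gives $H^i(K(k))=0$ for $2\leq i\leq n-1$, so the short exact sequence $0\to\mathcal{G}\to K\to\mathcal{E}\to 0$ yields $H^i(\mathcal{E}(k))\cong H^{i+1}(\mathcal{G}(k))$ for $2\leq i\leq n-2$, together with, for $i=n-1$, an injection $H^{n-1}(\mathcal{E}(k))\hookrightarrow\ker\bigl(H^n(\mathcal{G}(k))\to H^n(K(k))\bigr)$. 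Splitting the locally free resolution $0\to C^{1-n}\to\cdots\to C^{-1}\to\mathcal{G}\to 0$ into short exact sequences via syzygies and iterating long exact sequences shifts the cohomology degree by one at each step using Bott vanishing, eventually reducing $H^{i+1}(\mathcal{G}(k))$ to a cohomology group of some $\mathcal{O}(j)^{a_j}$ in a middle degree, which vanishes.

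The main obstacle is the edge case $i=n-1$: one must still show the map $H^n(\mathcal{G}(k))\to H^n(K(k))$ is injective. Via the short exact sequence $0\to\mathcal{G}\to C^0\to Q\to 0$ with $Q=\coker\alpha_{-1}$, this injectivity is equivalent to the vanishing of $H^{n-1}(Q(k))$, which follows from the analogous iterated long exact sequence argument applied to the resolution $0\to C^{1-n}\to\cdots\to C^{-1}\to C^0\to Q\to 0$ of $Q$, of length $n-1$.
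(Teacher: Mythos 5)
The paper does not actually contain a proof of this proposition --- Section \ref{ext-monads} states that all proofs are omitted and deferred to \cite{HJ} --- so I can only assess your argument on its own terms. Your treatment of (i) and (ii) is correct: the first hypercohomology spectral sequence of $C^{\bullet}(k)$ degenerates at $E_2$ for the reason you give, and the only contributing corner terms $E_{\infty}^{0,0}$ and $E_{\infty}^{0,n}$ are subquotients of $H^{0}(\mathcal{O}_{\mathbb{P}^n}(k))^{\oplus a_0}$ and $H^{n}(\mathcal{O}_{\mathbb{P}^n}(k))^{\oplus a_0}$, which vanish in the stated ranges.

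Part (iii) has a genuine gap at the dimension-shifting step, and it is not repairable by the method you use. Writing $\mathcal{G}_{-1}=\mathcal{G}$ and $0\to\mathcal{G}_{-j-1}\to C^{-j}\to\mathcal{G}_{-j}\to 0$ for the syzygies of \eqref{Resol}, the long exact sequence gives $H^{m}(\mathcal{G}_{-j}(k))\cong H^{m+1}(\mathcal{G}_{-j-1}(k))$ only while \emph{both} $H^{m}(C^{-j}(k))$ and $H^{m+1}(C^{-j}(k))$ vanish, i.e.\ only while $m+1\le n-1$. Starting from $H^{i+1}(\mathcal{G}(k))$ with $i\ge 2$, the degree reaches $n$ after $n-i-1\le n-3$ steps, strictly before the $n-2$ syzygy steps of the resolution are exhausted; at that point you are left with an injection into $H^{n}(\mathcal{G}_{-(n-i)}(k))$, which is a \emph{quotient} of $H^{n}(\mathcal{O}_{\mathbb{P}^n}(k-(n-i)))^{\oplus a_{-(n-i)}}$ and is nonzero for $k\le -i-1$. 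So the reduction never lands in a middle degree. The same failure occurs in your edge case: $H^{n-1}(Q(k))$ injects into $H^{n}(\mathcal{G}(k))$, which is a quotient of $H^{n}(\mathcal{O}_{\mathbb{P}^n}(k-1))^{\oplus a_{-1}}$ and need not vanish for $k\le -n$. In fact no argument using only the shape of the complex can close this gap: the complex $0\to 0\to\mathcal{O}_{\mathbb{P}^3}(-1)\to\mathcal{O}_{\mathbb{P}^3}^{\oplus 4}\to\mathcal{O}_{\mathbb{P}^3}(1)\to 0$ cutting out the null correlation bundle $N$ fits the definition of a perfect extended monad with $a_{-2}=0$, yet the display gives $H^{2}(N(-3))=\ker\bigl(H^{3}(\mathcal{O}_{\mathbb{P}^3}(-4))\to H^{3}(\mathcal{O}_{\mathbb{P}^3}(-3))^{\oplus 4}\bigr)=\mathbb{C}\neq 0$. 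What (iii) really asserts is the exactness of the row $q=n$ of your own $E_1$ page at the interior columns $p=2-n,\dots,-1$, which by Serre duality amounts to the vanishing of the intermediate sheaves $\mathcal{E}xt^{j}(\mathcal{E},\mathcal{O}_{\mathbb{P}^n})$ for $1\le j\le n-2$; this requires input beyond the ranks $a_i$. For the sheaves to which the paper actually applies the proposition --- kernels of $\mathcal{O}_{\mathbb{P}^n}^{\oplus r}\twoheadrightarrow\mathcal{Q}$ with $\mathcal{Q}$ zero-dimensional as in \eqref{structure} --- item (iii) follows in one line from that defining sequence, since $H^{i-1}(\mathcal{Q}(k))=H^{i}(\mathcal{O}_{\mathbb{P}^n}(k))^{\oplus r}=0$ for $2\le i\le n-1$.
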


Now let us denote by $\Omega^{-p}_{\pn}$ the bundle of holomorphic $(-p)$-forms on $\pn,$ where $p\leq0$ in our convention.

\begin{proposition}\label{Character}
If a coherent sheaf $\mathcal{E}$ on $\pn$ ($n\ge 2$) satisfies:
\begin{itemize}
\item[(i)] $\ho^{0}(\mathcal{E}(-1))=\ho^{n}(\pn,\mathcal{E}(-n))=0$;
\item[(ii)] $\ho^{q}(\mathcal{E}(k))=0 \quad\forall k,\quad2\leq q\leq n-1$ when $n\ge3$;
\item[(iii)] $\ho^{1}(\mathcal{E}\otimes\Omega^{-p}_{\pn}(-p-1))\neq0$ for $-n\le p\leq0$;
\end{itemize}
then $\mathcal{E}$ is the cohomology of a perfect extended monad.
\end{proposition}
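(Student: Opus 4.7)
The natural strategy is via Beilinson's spectral sequence. Starting from the version whose $E_{1}$-terms are of the form $\ho^{q}(\mathcal{F}\otimes\Omega^{-p}_{\pn}(-p))\otimes\opn(p)$, apply it to $\mathcal{F}=\mathcal{E}(-1)$ and tensor the whole spectral sequence by $\opn(1)$ to obtain
$$E_{1}^{p,q}=\ho^{q}\bigl(\mathcal{E}\otimes\Omega^{-p}_{\pn}(-p-1)\bigr)\otimes\opn(p+1),\qquad -n\le p\le 0,\ 0\le q\le n,$$
converging to $\mathcal{E}$ in total degree $0$ and to $0$ otherwise. Setting $k=p+1\in\{1-n,\dots,1\}$, the line bundles appearing on each row are exactly the summands $\opn(k)$ prescribed in the definition of a perfect extended monad, and the ranks along the $q=1$ row read $a_{k}=\dim\ho^{1}(\mathcal{E}\otimes\Omega^{1-k}_{\pn}(-k))$, all strictly positive by (iii).

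The heart of the argument is to kill the $E_{1}$-page outside the row $q=1$. For each $\ho^{q}(\mathcal{E}\otimes\Omega^{j}_{\pn}(j-1))$ with $q\ne 1$, I would reduce inductively to cohomology groups of the form $\ho^{q'}(\mathcal{E}(k'))$ by repeated use of the twisted Euler-type short exact sequences
$$0\to\Omega^{j}_{\pn}\to\wedge^{j}V^{*}\otimes\opn(-j)\to\Omega^{j-1}_{\pn}\to 0,\qquad j\ge 1,$$
together with $\Omega^{n}_{\pn}\simeq\opn(-n-1)$. Hypothesis (ii) directly kills the intermediate rows $2\le q\le n-1$; the induction on the row $q=0$ terminates at $\ho^{0}(\mathcal{E}(-1))=0$ (first vanishing in (i)); and the induction on the row $q=n$ terminates at $\ho^{n}(\mathcal{E}(-n))=0$ (second vanishing in (i)). Once the $E_{1}$-page is concentrated in the row $q=1$, all higher differentials vanish automatically (they would land in a zero row), so the spectral sequence collapses at $E_{2}$, and its convergence forces the surviving $d_{1}$-complex
$$0\to\opn(1-n)^{a_{1-n}}\to\cdots\to\opn^{a_{0}}\to\opn(1)^{a_{1}}\to 0$$
to have cohomology concentrated at the position $p+q=0$ (i.e.\ the $\opn^{a_{0}}$ term) and equal to $\mathcal{E}$ there. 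By Definition \ref{l-ext} this is an $(n-2)$-extended monad, hence a perfect extended monad with $\mathcal{E}$ as its cohomology.

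The principal obstacle is the bookkeeping of the second step: at every application of an Euler sequence one has to ensure that the connecting homomorphism in the long exact sequence lands in a cohomology group already known to vanish by (i) or (ii), and that the combined twists and cohomological indices never escape the range covered by those hypotheses. The case $n=2$ is a degenerate base case in which the intermediate rows are absent and condition (ii) is vacuous, and the argument then reduces to the classical Beilinson-monad characterization on $\mathbb{P}^{2}$.
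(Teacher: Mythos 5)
Your strategy coincides with the one behind the paper's (omitted) proof: Proposition \ref{Character} is quoted from \cite[Section 3]{HJ}, where it is proved exactly as you propose, by applying Beilinson's second spectral sequence to $\mathcal{E}(-1)$, twisting back by $\opn(1)$ so that $E_{1}^{p,q}=\ho^{q}(\mathcal{E}\otimes\Omega^{-p}_{\pn}(-p-1))\otimes\opn(p+1)$, and killing every row except $q=1$ by d\'evissage along the exterior powers of the Euler sequence. Your identification of the $E_1$-terms, of the ranks $a_k$ (nonzero by (iii)), the collapse once the page is concentrated in a single row, and the location of the surviving cohomology at the $\opn^{\oplus a_0}$ spot are all correct; the $q=0$ row is indeed disposed of in a single step, since $\ho^{0}(\mathcal{E}\otimes\Omega^{j}_{\pn}(j-1))$ injects into a direct sum of copies of $\ho^{0}(\mathcal{E}(-1))=0$.

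The one place where your sketch does not close as written is the top row, together with the upper ends of the intermediate-row inductions. The Euler-sequence d\'evissage at $q=n$ presents $\ho^{n}(\mathcal{E}\otimes\Omega^{j}_{\pn}(j-1))$ as a quotient of copies of $\ho^{n}(\mathcal{E}(-2))$ (or, running the sequences in the other direction, controls it by $\ho^{n}(\mathcal{E}(-1))$ together with lower rows); likewise the upward induction on an intermediate row terminates at groups such as $\ho^{n}(\mathcal{E}(j-n-2))$ with $j-n-2>-n$. None of these is literally among the hypotheses when $n\ge 3$: hypothesis (i) only gives $\ho^{n}(\mathcal{E}(-n))=0$, so the induction does not ``terminate at $\ho^{n}(\mathcal{E}(-n))=0$'' as you assert. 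The missing ingredient is the standard monotonicity of top cohomology: for any coherent sheaf and any hyperplane, multiplication by the defining linear form has kernel and cokernel supported on that hyperplane, whence the long exact sequence yields a surjection $\ho^{n}(\mathcal{E}(k))\twoheadrightarrow\ho^{n}(\mathcal{E}(k+1))$, and therefore (i) forces $\ho^{n}(\mathcal{E}(k))=0$ for every $k\ge -n$. Once this lemma is inserted, all terminal terms of your inductions vanish and the argument is complete.
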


\vspace{0.3cm}


\section{Quotients of $\mathcal{O}_{\pn}^{\oplus r}$ supported on a zero-dimensional subscheme of $\pn$}\label{z-cy}

\vspace{0,5cm}

We now consider sheaves $\mathcal{E}$ of rank $r$ on $\pn$ fitting in the following short exact sequence
\begin{equation}\label{structure}
0 \to \mathcal{E} \to \mathcal{O}_{\pn}^{\oplus r} \to \mathcal{Q} \to 0,
\end{equation}
where $\mathcal{Q}$ is a pure torsion sheaf of length $c$ supported on a $0$-dimensional subscheme $Z\subset\pn$. 

Note that the Chern character of $\mathcal{E}$ is given by $ch(\mathcal{E})=r-cH^{n}$, and that $\mathcal{E}$ is necessarily torsion free. Such sheaves can also be regarded as points in the Quot scheme $Quot^{P=c}(\mathcal{O}_{\pn}^{\oplus r})$. 

One can easily see that $\mathcal{E}$ is $\mu-$semi-stable; A subsheaf $\mathcal{F},$ of $\mathcal{E},$ should satisfy $c_{1}(\mathcal{F})\leq0$ since it is also a subsheaf of $\mathcal{O}_{\mathbb{P}^{n}}^{\oplus r}.$

In the case $r=1$, it is clear that $\mathcal{E}$ is the sheaf of ideals in $\mathcal{O}_{\pn}$ associated to the zero-dimensional subscheme $Z.$ 

\begin{proposition}\label{perfect-quot}
Every sheaf $\mathcal{E}$ on $\pn$ given by sequence \eqref{structure} is the cohomology of a perfect extended monad $P^{\bullet}$ with terms of the form $P^{-i}:=V_{i}\otimes\mathcal{O}_{\pn}(i)$, $i=1-n,\dots,0,1$, where
\begin{equation}\label{eq-lema}
V_{i}:=\ho^{1}(\mathcal{E}\otimes\Omega^{1-i}_{\pn}(-i)) \cong
\ho^{0}(\mathcal{Q}\otimes\Omega^{1-i}_{\pn}(-i)).
\end{equation}
Furthermore, we have the following isomorphisms:
\begin{equation}\label{id1}
V_1 \cong \ho^{0}(\mathcal{Q})
\end{equation}
\begin{equation}\label{id2}
V_{i}\cong  
\left\{\begin{array}{ll} V_{1}^{\oplus n}\oplus\mathbb{C}^{r} & \textnormal{for }i=0 \\ 
V_{1}^{\oplus \binom{n}{1-i}}  &  \textnormal{for }i<0 \end{array}\right.
\end{equation}
\end{proposition}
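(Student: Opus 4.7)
The plan is to apply Proposition~\ref{Character} to $\mathcal{E}$ and then to extract the multiplicities of the monad from the long exact sequence of cohomology associated to \eqref{structure}, tensored with the bundles $\Omega^{1-i}_{\pn}(-i)$, using Bott's formulas throughout.

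To verify the hypotheses of Proposition~\ref{Character}, the single crucial input is that $\mathcal{Q}$ is zero-dimensional, so $\ho^{j}(\mathcal{Q}\otimes\mathcal{F})=0$ for $j\ge 1$ and any coherent $\mathcal{F}$. Combined with the standard vanishings on $\pn$, twisting \eqref{structure} by $\opn(k)$ immediately traps $\ho^{0}(\mathcal{E}(-1))$, $\ho^{n}(\mathcal{E}(-n))$, and $\ho^{q}(\mathcal{E}(k))$ for $2\le q\le n-1$ between zero groups, yielding conditions (i) and (ii).

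For (iii), I would tensor \eqref{structure} by $\Omega^{-p}_{\pn}(-p-1)$ for $-n\le p\le 0$. By Bott, $\ho^{0}(\Omega^{-p}_{\pn}(-p-1))=0$ throughout this range (the twist $-p-1$ is strictly less than the degree $-p$ of the form, and equals $-1$ at $p=0$) and $\ho^{i}(\Omega^{-p}_{\pn}(-p-1))=0$ for $i\ge 2$. The induced long exact sequence collapses to
\begin{equation*}
0\to\ho^{0}(\mathcal{Q}\otimes\Omega^{-p}_{\pn}(-p-1))\to\ho^{1}(\mathcal{E}\otimes\Omega^{-p}_{\pn}(-p-1))\to\ho^{1}(\Omega^{-p}_{\pn}(-p-1))^{\oplus r}\to 0.
\end{equation*}
Since $Z$ is zero-dimensional, $\Omega^{-p}_{\pn}(-p-1)|_{Z}$ is trivial of rank $\binom{n}{-p}$, so $\ho^{0}(\mathcal{Q}\otimes\Omega^{-p}_{\pn}(-p-1))\cong\ho^{0}(\mathcal{Q})^{\oplus\binom{n}{-p}}\ne 0$ (as $c>0$), verifying (iii) and simultaneously identifying the $V_{i}$ after the reindexing $p=i-1$.

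The explicit formulas then fall out of the same exact sequence. For $i=1$ (i.e.\ $p=0$) both Bott terms vanish and $V_{1}\cong\ho^{0}(\mathcal{Q}(-1))\cong\ho^{0}(\mathcal{Q})$, the last step using that $\opn(-1)|_{Z}$ is trivial; this is \eqref{id1}. For $i<0$, both Bott terms vanish again and $V_{i}\cong V_{1}^{\oplus\binom{n}{1-i}}$. The only delicate case is $i=0$: by Bott, $\ho^{1}(\Omega^{1}_{\pn})\cong\mathbb{C}$ is the unique non-vanishing $\ho^{1}$ in the range, so the sequence becomes $0\to V_{1}^{\oplus n}\to V_{0}\to\mathbb{C}^{r}\to 0$ and splits as vector spaces, yielding $V_{0}\cong V_{1}^{\oplus n}\oplus\mathbb{C}^{r}$. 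The main obstacle is precisely this bookkeeping at $i=0$: the isomorphism $V_{i}\cong\ho^{0}(\mathcal{Q}\otimes\Omega^{1-i}_{\pn}(-i))$ in \eqref{eq-lema} is literally correct for $i\ne 0$, while at $i=0$ one must account for the extra $\mathbb{C}^{r}$ summand coming from the Bott non-vanishing $\ho^{1}(\Omega^{1}_{\pn})$.
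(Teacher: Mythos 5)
Your argument is correct and is essentially the proof the paper intends: the paper omits the proof of Proposition \ref{perfect-quot} and defers to the $r=1$ case in \cite[Section 4]{HJ}, which proceeds exactly as you do — verify the hypotheses of Proposition \ref{Character} via the long exact sequence of \eqref{structure} together with the zero-dimensionality of $\mathcal{Q}$, then read off the $V_i$ from Bott's formulas. Your observation that \eqref{eq-lema} is not literally accurate at $i=0$ — where the Bott term $\ho^{1}(\Omega^{1}_{\pn})\cong\mathbb{C}$ contributes the extra $\mathbb{C}^{r}$ summand recorded in \eqref{id2} — is a correct reading of a small imprecision in the statement, not a gap in your proof.
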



One can repeat the procedure used in  \cite[Section 4]{HJ} to obtain the following reduced form of the perfect extended monad maps

\begin{equation}\label{monadmaps} \begin{split}
&\alpha_{0}=\left(\begin{array}{lllllll} B_{0}z_{n}-z_{0}&B_{1}z_{n}-z_{1} & \cdots&B_{n-1}z_{n}-z_{n-1},&v_{1}z_{n}&\cdots&v_{r}z_{n}\end{array}\right). \\
&\alpha_{-1}=\left(\begin{array}{llll} A_{0} & A_{1}  & \cdots & A_{n-2} \\ \hspace{0.1cm}0 & \hspace{0.1cm}0 & \cdots & \hspace{0.3cm}0 \\ \hspace{0.1cm}\vdots & \hspace{0.1cm}\vdots & \cdots & \hspace{0.3cm}\vdots \\ \hspace{0.1cm}0 & \hspace{0.1cm}0 & \cdots & \hspace{0.3cm}0 \end{array}\right).
\end{split}
\end{equation}
where each block $A_{i},$ $0\leq i\leq n-2$ is an $[ n\cdot c \times (n-i-1)\cdot c]$-matrix of the form
{\small
\begin{displaymath}
A_{i}=\left( \begin{array}{lllll}
\hspace{1cm}0&\hspace{1cm}0&\hspace{1cm}0&\cdots&\hspace{1cm}0\\
\hspace{1cm}0 &\hspace{1cm}0&\hspace{1cm}0&\cdots& \hspace{1cm}0\\
\hspace{1cm}\vdots &\hspace{1cm}\vdots&\hspace{1cm}\vdots&\cdots&\hspace{1cm}\vdots \\
\hspace{1cm}0&\hspace{1cm}0&\hspace{1cm}0&\cdots&\hspace{1cm}0\\
B_{i+1}z_{n}-z_{i+1}&B_{i+2}z_{n}-z_{i+2}&B_{i+3}z_{n}-z_{i+3}&\cdots&B_{n-1}z_{n}-z_{n-1} \\
-B_{i}z_{n}+z_{i} &\hspace{1cm}0&\hspace{1cm}0&\cdots&\hspace{1cm}0 \\
 \hspace{1cm}0&-B_{i}z_{n}+z_{i}&\hspace{1cm}0&\cdots&\hspace{1cm}0 \\
 \hspace{1cm}0&\hspace{1cm}0&-B_{i}z_{n}+z_{i}&\cdots&\hspace{1cm}0 \\
 \hspace{1cm}0&\hspace{1cm}0&\hspace{1cm}0&\cdots&\hspace{1cm}0 \\
\hspace{1cm}\vdots&\hspace{1cm}\vdots&\hspace{1cm}\vdots&\ddots&\hspace{1cm}0 \\
 \hspace{1cm}0&\hspace{1cm}0&\hspace{1cm}0&\cdots&-B_{i}z_{n}+z_{i}
\end{array}\right)
\end{displaymath}
}
The first non vanishing line in $A_{i}$ is $(i+1)$-est one. For instance, when $n=3$ there are two blocks $$A_{0}=\left(\begin{array}{ll} \hspace{0.3cm}B_{1}z_{3}-z_{1} & \hspace{0.3cm}B_{2}z_{3}-z_{2} \\ -B_{0}z_{3}+z_{0}&\hspace{0.8cm}0\\ \ \hspace{0.8cm}0&-B_{0}z_{3}+z_{0} \end{array}\right)
\hspace{1cm} A_{1}=\left(\begin{array}{l}\hspace{0.8cm}0\\ \hspace{0.3cm}B_{2}z_{3}-z_{2}\\-B_{1}z_{3}+z_{1} \end{array}\right),$$ of respective sizes $[3c \times 2 c]$ and $[3 c \times  c].$
The reader may check that $\alpha_{0}\circ\alpha_{-1}=0\Leftrightarrow [B_{i},B_{j}]=0,$ for all $0\leq i,j\leq n-1,$ and that the following result holds, as in \cite[Lemma 4.4 \& Theorem 4.5]{HJ}.
 
\vspace{0.3cm}

\begin{theorem}[Inverse construction]\label{reconstruction}
\hspace{7cm}
\begin{itemize}
\item[(i)] The map $\alpha_{0}$ given above is surjective if and only if the ADHM datum $(B_{0},\cdots,B_{n-1},v_{1},\cdots,v_{r})$ is stable.
\item[(ii)] To a stable ADHM datum $X=(B_{0},\cdots,B_{n-1},v_{1},\cdots,v_{r})\in\mathcal{U}(n,c,r)^{st}$ one can associate the perfect extended monad as in \eqref{perfect-quot} with maps $\alpha_{-2},\alpha_{-1},\alpha_{0}$ given as in \eqref{monadmaps}, such that its cohomology is an ideal sheaf whose restriction to $\mathbb{C}^{n}=\mathbb{P}^{n}\backslash\wp$ is isomorphic to the one given by Theorem \ref{Corresp}.
\end{itemize}
\end{theorem}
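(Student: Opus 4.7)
The plan is to treat the two parts in sequence. For part (i), I would check surjectivity of $\alpha_0$ fiberwise on $\mathbb{P}^n$. On the hyperplane $\{z_n=0\}$ the map reduces to $(w_0,\ldots,w_{n-1},\lambda)\mapsto -\sum_i z_i w_i$, whose image is automatically all of $V$ since not every $z_i$ can vanish; hence only the affine chart $z_n\neq 0$ matters. There, at $p=(z_0,\ldots,z_{n-1},1)$ the image of $\alpha_0|_p$ is $W(p):=\sum_i(B_i-z_i\mathbf{1})V+\langle v_1,\ldots,v_r\rangle_{\mathbb{C}}$, and surjectivity amounts to $W(p)=V$ for every such $p$. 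If some $W(p)\subsetneq V$, pick a nonzero $\phi\in V^*$ vanishing on $W(p)$: then $\phi\circ B_i=z_i\phi$ and $\phi(v_j)=0$, so $\ker\phi$ is a proper $B_i$-invariant subspace containing every $v_j$, contradicting stability. Conversely, if $X$ is unstable with destabilizing $S\subsetneq V$, the commuting operators on $(V/S)^*$ dual to the induced $\bar B_i$ admit a common eigenvector $\psi$, say with eigenvalues $(\lambda_0,\ldots,\lambda_{n-1})$; pulling $\psi$ back along $V\twoheadrightarrow V/S$ produces a nonzero $\phi\in V^*$ with $\phi\circ B_i=\lambda_i\phi$ and $\phi(v_j)=0$, so that $W(p)\subsetneq V$ at $p=(\lambda_0,\ldots,\lambda_{n-1},1)$.

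For part (ii), my strategy is: (a) verify the complex relations $\alpha_{-k-1}\circ\alpha_{-k}=0$; (b) verify exactness at every position except zero; (c) identify the cohomology with the sheaf of \eqref{structure} and match its restriction to the affine chart with the surjection $\Phi_X$ from Theorem~\ref{Corresp}. Step (a) reduces to the commutation relations: the case $k=0$ is the computation flagged immediately before the theorem, and the higher compositions follow from $[B_i,B_j]=0$ together with the Koszul-type antisymmetry encoded in the block form of the $A_i$ in \eqref{monadmaps}. For steps (b) and (c) I would work in the affine chart $U_n=\{z_n\neq 0\}\cong\mathbb{C}^n$, where every twist $\mathcal{O}_{\mathbb{P}^n}(k)|_{U_n}$ trivializes. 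Writing $R=\mathbb{C}[z_0,\ldots,z_{n-1}]$ and identifying $C^{-k}|_{U_n}\cong R\otimes_{\mathbb{C}} V\otimes_{\mathbb{C}}\wedge^{k+1}\mathbb{C}^n$ for $k\geq 1$ along with $C^0|_{U_n}\cong R\otimes V\otimes\mathbb{C}^n\oplus R^{\oplus r}$, the maps $\alpha_{-k}$ coincide (up to sign) with the Koszul differential associated to the commuting endomorphisms $(z_i\mathbf{1}-B_i)_{i=0}^{n-1}$ on $R\otimes V$, while the extra summand $R^{\oplus r}\subset C^0|_{U_n}$ enters $\alpha_0$ through $(\lambda_j)\mapsto\sum_j\lambda_j\otimes v_j$.

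This identifies the restricted complex with the mapping cone of the chain map $R^{\oplus r}[0]\to K_\bullet$ lifting $\Phi_X$, where $K_\bullet$ denotes the Koszul resolution of $V$ regarded as an $R$-module via $z_i\mapsto B_i$. The long exact sequence of the mapping cone, together with surjectivity of $\Phi_X$ (equivalent to stability by Theorem~\ref{Corresp} and Proposition~\ref{stab-crit}), forces the restricted cohomology to vanish outside position $0$ and to equal $\ker\Phi_X$ at position $0$. Surjectivity of $\alpha_0$ at the remaining points of $\mathbb{P}^n\setminus U_n$ is ensured by part (i), and the analogous Koszul analysis on the other standard charts of $\mathbb{P}^n$ yields exactness off position $0$ globally. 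Putting this together, the complex is a genuine perfect extended monad, its cohomology $\mathcal{E}$ fits into \eqref{structure}, and $\mathcal{E}|_{\mathbb{C}^n}\cong\ker\Phi_X$ inside $\mathcal{O}_{\mathbb{C}^n}^{\oplus r}$, as prescribed in Theorem~\ref{Corresp}.

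The principal obstacle is the bookkeeping in step (b): matching the explicit block matrices of \eqref{monadmaps} with the Koszul differential on $R\otimes V\otimes\wedge^\bullet\mathbb{C}^n$ requires keeping track of signs and index orderings across the $n-1$ steps of the extended monad, and the extra $R^{\oplus r}$ summand at $C^0$ must be inserted correctly in the mapping cone description. Once that matching is in hand, exactness off position $0$ is formal, following from the standard exactness of the Koszul complex on a polynomial ring.
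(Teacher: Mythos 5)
Your proposal is correct in substance, but it is worth noting that the paper itself gives essentially no argument here: it delegates everything to \cite{HJ} (Lemma 4.4 and Theorem 4.5), only flagging the computation $\alpha_0\circ\alpha_{-1}=0\Leftrightarrow[B_i,B_j]=0$. Your part (i) is exactly the argument of the cited Lemma 4.4 (fiberwise check, trivial at $z_n=0$, and on the affine chart the duality between a proper invariant subspace containing the $v_j$ and a common eigenvector of the transposed operators annihilating the image), extended verbatim to $r$ vectors. For part (ii) you take a more structural route than the explicit block-matrix verification the paper points to: identifying the restricted complex with the mapping cone of $R^{\oplus r}\to K_\bullet(z_i\mathbf{1}-B_i;\,R\otimes V)$ makes exactness off degree zero and the identification of $\mathcal{H}^0$ with $\ker\Phi_X$ formal, which is cleaner and scales better in $n$ than chasing the matrices $A_i$. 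Two points deserve more care than your sketch gives them. First, exactness of a complex of locally free sheaves is a stalk condition, and over points of the support of $\mathcal{Q}$ the complex is \emph{not} exact on fibers in the middle degrees; so the module-level Koszul resolution argument on $U_n$ (which you do invoke) is genuinely needed there and cannot be replaced by a fiberwise rank count. Second, ``the analogous Koszul analysis on the other standard charts'' should be made precise: on a chart $z_j\neq0$ with $j\neq n$, near the hyperplane at infinity the commuting endomorphism $B_jz_n-z_j$ specializes to an invertible one, so the Koszul complex there is contractible and the cohomology in degree zero is just $\mathcal{O}^{\oplus r}$, consistent with $\mathcal{Q}$ being supported in $U_n$; this is the observation that closes the argument globally. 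With those two points spelled out, your proof is complete and arguably more transparent than the reference the paper relies on.
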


We end this section by mentioning the following results about the representability of the Quot functor:

\begin{theorem}\label{represent}
The scheme $\mathcal{M}(n,c,r)$ is a fine moduli space for the Quot functor
$\mathcal{Q}\textnormal{uot}_{\C^n}(c,r){\bf(\cdot)}$ on $\mathbb{C}^{n}$.
\end{theorem}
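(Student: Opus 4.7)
The plan is to upgrade the set-theoretical bijection of Theorem~\ref{Corresp} into an isomorphism of functors, by constructing a natural transformation $\eta:\mathcal{Q}\textnormal{uot}_{\C^n}(c,r)\to\Hom(-,\mathcal{M}(n,c,r))$ together with an explicit universal family over $\mathcal{M}(n,c,r)$ realizing its inverse.

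First, starting from a flat $T$-family of quotients $[\mathcal{O}_{T\times\C^n}^{\oplus r}\twoheadrightarrow\mathcal{Q}_T]$, I would push forward along the projection $\pi:T\times\C^n\to T$. Since $\mathcal{Q}_T$ is $T$-flat with fibrewise length $c$ and proper support over $T$, the sheaf $\mathcal{V}:=\pi_*\mathcal{Q}_T$ is locally free of rank $c$ by cohomology and base change. Multiplication by the coordinates $z_0,\ldots,z_{n-1}$ descends to commuting endomorphisms $B_0,\ldots,B_{n-1}\in\End_{\mathcal{O}_T}(\mathcal{V})$, and the canonical basis of $\mathcal{O}^{\oplus r}$ projects to global sections $v_1,\ldots,v_r\in\Gamma(T,\mathcal{V})$. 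Fibrewise stability follows from Proposition~\ref{stab-crit}, so over any trivializing open $U\subset T$ one obtains a point of $\mathcal{U}(n,c,r)^{\rm st}(U)$, canonical up to the $G$-action; this assembles into a well-defined morphism $\eta_T:T\to\mathcal{U}(n,c,r)^{\rm st}/G=\mathcal{M}(n,c,r)$.

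For the inverse, I would build an explicit universal family on $\mathcal{M}(n,c,r)\times\C^n$. By Propositions~\ref{prop4} and \ref{closed orbit}, the $G$-action on $\mathcal{U}(n,c,r)^{\rm st}$ is free with closed orbits in the linearized setting, so $\mathcal{U}(n,c,r)^{\rm st}\to\mathcal{M}(n,c,r)$ is a principal $G$-bundle. The tautological ADHM datum on $\mathcal{U}(n,c,r)^{\rm st}$ assembles, via the formulae~\eqref{monadmaps}, into a $G$-equivariant perfect extended monad on $\mathcal{U}(n,c,r)^{\rm st}\times\mathbb{P}^n$, and Theorem~\ref{reconstruction}(ii) identifies its cohomology with the kernel of a $G$-equivariant surjection $\mathcal{O}^{\oplus r}\twoheadrightarrow\mathcal{Q}^{\rm taut}$ on $\mathcal{U}(n,c,r)^{\rm st}\times\C^n$. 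Faithfully flat descent along the principal $G$-bundle then yields a family $[\mathcal{O}^{\oplus r}\twoheadrightarrow\mathcal{Q}^{\rm univ}]$ on $\mathcal{M}(n,c,r)\times\C^n$, and to verify the universal property one runs the construction of the previous paragraph on $\mathcal{Q}^{\rm univ}$ (which returns the tautological ADHM datum by design), while conversely checking that the perfect extended monad built from the ADHM data extracted from a general family $\mathcal{Q}_T$ recovers $\mathcal{Q}_T$---the latter reducing to the full-faithfulness of the cohomology functor on perfect extended monads \cite[Corollary~3.5]{HJ}.

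The hard part will be the descent step: one must track $G$-equivariant structures through each leg of the passage quotient $\leftrightarrow$ ADHM datum $\leftrightarrow$ perfect extended monad, and verify the compatibilities required for descent along $\mathcal{U}(n,c,r)^{\rm st}\to\mathcal{M}(n,c,r)$. This is the higher-rank, higher-dimensional analogue of the corresponding argument in \cite{HJM}, which I expect to carry over essentially verbatim once Theorems~\ref{Corresp} and~\ref{reconstruction} are in hand.
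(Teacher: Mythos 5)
Your proposal is correct and follows essentially the same route as the paper, which simply defers to the argument of \cite[Theorem 4.2]{HJM} (\emph{mutatis mutandis}); your sketch is a faithful expansion of that argument — pushing forward a flat family to extract a stable ADHM datum, and descending the tautological family along the principal $G$-bundle $\mathcal{U}(n,c,r)^{\rm st}\to\mathcal{M}(n,c,r)$ to produce the universal quotient. The only point worth flagging is that the principal-bundle structure requires more than trivial stabilizers and closed orbits (one invokes Luna's slice theorem for the reductive group $G$ acting freely on the stable locus), but this is exactly the step handled in \cite{HJM} and carries over as you expect.
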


\begin{proof}
The proof is similar, \emph{mutatis mutandis}, to that of \cite[Theorem 4.2]{HJM}.
\end{proof}

A similar result also holds for affine varieties, in general. Following the notations in \cite[Section 5]{HJ}, we let $\mathbb{Y}=\mathcal{Z}(Z_{\mathbb{Y}})\subset\mathbb{C}^{n}$ be an affine variety, given by the zero locus of the ideal $Z_{\mathbb{Y}}\subsetneq\mathbb{C}[x_{1},\cdots,x_{n}].$ We denote by $A(\mathbb{Y})$ the affine coordinate ring of the variety $\mathbb{Y},$ i.e., $A(\mathbb{Y})=\frac{\mathbb{C}[x_{1},\cdots,x_{n}]}{Z_{\mathbb{Y}}}.$ 

Defining the set
$$ \mathcal{U}_{\mathbb{Y}}(c,r)^{st} := \{(B_{0},\cdots,B_{n-1},v_{1},\cdots,v_{r})\in\mathcal{U}(n,c,r)^{st}/\begin{array}{l}f(B_{0},\cdots,B_{n-1})=0,\\ \forall f\in Z_{\mathbb{Y}} \end{array}\}. $$ 
we have the following generalization of \cite[Theorem 5.1]{HJ}:
\begin{theorem}\label{correspond2}
The scheme $\mathcal{M}_{\mathbb{Y}}(c,r) := \mathcal{U}_{\mathbb{Y}}(c,r)^{st} / GL(V)$ is a fine moduli space for the Quot functor $\mathcal{Q}\textnormal{uot}_{\mathbb{Y}}(c,r)(\cdot)$ on $\mathbb{Y}.$
\end{theorem}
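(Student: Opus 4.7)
The plan is to deduce this theorem from Theorem \ref{represent} by working relative to the closed embedding $\iota:\mathbb{Y}\hookrightarrow\mathbb{C}^{n}$. First I would observe that pushforward along $\iota$ identifies the Quot functor $\mathcal{Q}\textnormal{uot}_{\mathbb{Y}}(c,r)$ with the subfunctor of $\mathcal{Q}\textnormal{uot}_{\mathbb{C}^{n}}(c,r)$ consisting of length-$c$ quotients whose scheme-theoretic support is contained in $\mathbb{Y}$: a family $\mathcal{O}_{T\times\mathbb{Y}}^{\oplus r}\twoheadrightarrow\mathcal{Q}_{T}$ pushes forward to a family $\mathcal{O}_{T\times\mathbb{C}^{n}}^{\oplus r}\twoheadrightarrow\iota_{\ast}\mathcal{Q}_{T}$ annihilated by $Z_{\mathbb{Y}}$, and conversely any such $Z_{\mathbb{Y}}$-annihilated family descends uniquely to a family on $\mathbb{Y}$. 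This isomorphism of functors reduces the representability problem to singling out the subfunctor of quotients supported in $\mathbb{Y}$.

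Next I would translate the $Z_{\mathbb{Y}}$-annihilation condition into ADHM language via Theorem \ref{Corresp}. Under that correspondence, the $c$-dimensional space $V$ acquires a $\mathbb{C}[z_{0},\ldots,z_{n-1}]$-module structure through $z_{i}\mapsto B_{i}$; that this structure factors through $A(\mathbb{Y})$ is exactly the algebraic condition $f(B_{0},\ldots,B_{n-1})=0$ for every $f\in Z_{\mathbb{Y}}$, which is precisely the defining condition of $\mathcal{U}_{\mathbb{Y}}(c,r)^{st}$ as a $GL(V)$-invariant closed subvariety of $\mathcal{U}(n,c,r)^{st}$. Propositions \ref{prop4} and \ref{closed orbit} restrict to this invariant closed subset, so the quotient $\mathcal{M}_{\mathbb{Y}}(c,r)$ exists as a geometric (free) quotient and sits as a closed subscheme of $\mathcal{M}(n,c,r)$.

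For the fine moduli property I would then run the argument of \cite[Theorem 5.1]{HJ} \emph{mutatis mutandis}. Given a family of length-$c$ quotients over $T\times\mathbb{Y}$, its pushforward to $T\times\mathbb{C}^{n}$ yields, by Theorem \ref{represent}, a unique classifying morphism $\phi_{T}:T\to\mathcal{M}(n,c,r)$; the $Z_{\mathbb{Y}}$-annihilation of the fibrewise ADHM data forces $\phi_{T}$ to land in the closed subscheme $\mathcal{M}_{\mathbb{Y}}(c,r)$. Conversely, a universal family on $\mathcal{M}_{\mathbb{Y}}(c,r)\times\mathbb{Y}$ is obtained by restricting the universal quotient on $\mathcal{M}(n,c,r)\times\mathbb{C}^{n}$ to $\mathcal{M}_{\mathbb{Y}}(c,r)\times\mathbb{Y}$ (equivalently, by descending the tautological surjection $\alpha_{0}$ of \eqref{monadmaps} built from the universal ADHM datum on $\mathcal{U}_{\mathbb{Y}}(c,r)^{st}$ through the free $GL(V)$-action). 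Uniqueness of $\phi_{T}$ then follows from uniqueness in Theorem \ref{represent} together with the fact that $\mathcal{M}_{\mathbb{Y}}(c,r)\hookrightarrow\mathcal{M}(n,c,r)$ is a monomorphism.

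The main technical obstacle is the relative, in-families form of this correspondence: one must verify that the $GL(V)$-principal bundle $\mathcal{U}_{\mathbb{Y}}(c,r)^{st}\to\mathcal{M}_{\mathbb{Y}}(c,r)$ allows genuine descent of the universal surjection, not just a local one, so that the resulting family on the base $\mathcal{M}_{\mathbb{Y}}(c,r)\times\mathbb{Y}$ is canonically defined and compatible with base change. This is precisely where the closedness and $GL(V)$-equivariance of $\mathcal{U}_{\mathbb{Y}}(c,r)^{st}\subset\mathcal{U}(n,c,r)^{st}$ is essential: it permits one simply to restrict the already-descended universal object from $\mathcal{M}(n,c,r)\times\mathbb{C}^{n}$ rather than redo the descent from scratch, and it reduces all non-trivial verifications to those already carried out in Theorem \ref{represent}.
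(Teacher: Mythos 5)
Your proposal is correct and follows essentially the same route as the paper, which offers no independent argument here beyond defining $\mathcal{U}_{\mathbb{Y}}(c,r)^{st}$ by the condition $f(B_{0},\ldots,B_{n-1})=0$ for $f\in Z_{\mathbb{Y}}$ and invoking the generalization of \cite[Theorem 5.1]{HJ}; your reduction to Theorem \ref{represent} via pushforward along $\mathbb{Y}\hookrightarrow\mathbb{C}^{n}$ and restriction of the universal family is exactly the intended mechanism. The only loose phrase is deducing the factorization of $\phi_{T}$ from the \emph{fibrewise} vanishing of $f(B_{0},\ldots,B_{n-1})$ --- for a scheme-theoretic factorization through the closed subscheme one should impose the vanishing on the family of ADHM data over $T$ itself, which the $Z_{\mathbb{Y}}$-annihilation of the family does give.
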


\subsection{The $\p3$ case}\label{reduction}

Now, we fix a hyperplane $\wp\subset\p3$. We shall describe how to get linear algebraic data out of the perfect extended monad corresponding to a quotient  $\mathcal{O}_{\pn}^{\oplus r} \twoheadrightarrow \mathcal{Q}$ where the purely torsion sheaf $\mathcal{Q}$ is supported on a $0$-dimensional
subscheme. 

 We first choose homogeneous coordinates $[z_{0};z_{1};z_{2};z_{3}]$ on $\p3$
in such a way that the hyperplane $\wp$ is given by the equation $z_{3}=0$. We also regard such coordinates as a basis for the space of global sections $\ho^{0}(\mathcal{O}_{\p3}(1))$. 

By Proposition \ref{perfect-quot}, there is a perfect extended monad $P^{\bullet}$ with cohomology equal to the ideal sheaf $\mathcal{E}$. It is given by
\begin{equation}\label{cpx-p3}
{\small \xymatrix@C-1pc{
0\ar[r]&V_{1}\otimes\mathcal{O}_{\p3}(-2)\ar[r]^{\alpha_{-2}} & V_{1}^{\oplus3}\otimes\mathcal{O}_{\p3}(-1)\ar[r]^{\alpha_{-1}\hspace{0.5cm}}& (V_{1}^{\oplus3}\oplus\mathbb{C}^{r})\otimes\mathcal{O}_{\p3}\ar[r]^{\alpha_{0}\hspace{0.2cm}} & V_{1}\otimes\mathcal{O}_{\p3}(1)\ar[r]&0
}}
\end{equation}

\noindent where the maps  maps $\alpha_{-2},$ $\alpha_{-1}$ and $\alpha_{0}$ are given by:
\begin{equation}\label{alpha3}
\begin{split}
\alpha_{-2}=&\left(\begin{array}{l} -B_{2}z_{3}+z_{2} \\\hspace{0.3cm} B_{1}z_{3}-z_{1}\\ -B_{0}z_{3}+z_{0}\end{array}\right);
\quad\alpha_{-1}=\left(\begin{array}{lll} \hspace{0.3cm}B_{1}z_{3}-z_{1} & \hspace{0.3cm}B_{2}z_{3}-z_{2} &\hspace{0.8cm}0\\ -B_{0}z_{3}+z_{0}&\hspace{0.8cm}0& \hspace{0.3cm}B_{2}z_{3}-z_{2}\\ \hspace{0.8cm}0&-B_{0}z_{3}+z_{0}&-B_{1}z_{3}+z_{1} \\ \hspace{0.8cm}0&\hspace{0.8cm}0&\hspace{0.8cm}0\\ \hspace{0.8cm}\vdots & \hspace{0.8cm}\vdots & \hspace{0.8cm}\vdots \\ \hspace{0.8cm}0 & \hspace{0.8cm}0 & \hspace{0.8cm}0\end{array}\right); \\ 
&\alpha_{0}=\left(\begin{array}{llllll} B_{0}z_{3}-z_{0}&B_{1}z_{3}-z_{1} & B_{2}z_{3}-z_{2}&v_{1}z_{n}&\cdots&v_{r}z_{n}\end{array}\right).
\end{split}
\end{equation}
such that
\begin{equation}
[B_{0},B_{1}]=0;\quad[B_{0},B_{2}]=0;\quad[B_{1},B_{2}]=0,
\end{equation}
Moreover the ADHM datum
$ (B_{0},B_{1},B_{2},v_{1},\cdots,v_{r})\in\End(V_{1})^{\oplus3}\oplus V_{1}^{\oplus r} $ is indeed stable. 

\vspace{0.3cm}

\section{Relation with Quivers and their representations}\label{quivers}

Another way of reorganizing the linear algebraic data is through the notion of \emph{quivers} \cite{K,BDJ,FJM,JP,N1,N2}. A quiver $\mathcal{Q}$ is the data $(\mathcal{Q}_{0}, \mathcal{Q}_{1}, s,t:\mathcal{Q}_{1}\longrightarrow\mathcal{Q}_{0})$ consisting of a finite set $\mathcal{Q}_{0}$ of vertices, a finite set $\mathcal{Q}_{1}$ of arrows between the vertices and maps, $s$ and $t,$ assigning to every arrow $\rho,$ the vertex $t(\rho),$ called its target, and the vertex $s(\rho),$ called its source.

A \emph{$\mathcal{Q}-$ representation} $R$ of a quiver $\mathcal{Q}$ is given by $R=\{(V_{\nu},b_{\rho})_{\nu\in\mathcal{Q}_{0},\rho\in \mathcal{Q}_{1}} | b_{\rho}:V_{s(\rho)}\to V_{t(\rho)}\},$ where $V_{\nu}$ is a $\mathbb{C}-$vector space and $b_{\rho}$ is a $\mathbb{C}-$linear map. 
For each $\mathcal{Q}-$representation, one can associate the vector $\vec{m}:=({\rm dim}V_{\nu})_{v\in\mathcal{Q}_{0}}$ called the \emph{dimension vector}. 
A morphism of $\mathcal{Q}-$representations $R=(V_{\nu},b_{\rho})$ and $S=(S_{\nu},a_{\rho})$ is collection of $\mathbb{C}-$ linear maps $L_{\nu}:V_{\nu}\to S_{\nu},$ for all $\nu\in\mathcal{Q}_{0},$ making the following diagram commute:
\begin{equation}
   \xymatrix@R-1pc@C-1pc{
   V_{s(\rho)}\ar[r]^{b_{\rho}}\ar[d]_{L_{s(\rho)}}& V_{t(\rho)}\ar[d]^{L_{t(\rho)}} \\
   S_{s(\rho)}\ar[r]^{a_{\rho}}& S_{t(\rho)}} 
\end{equation}

for all $\rho\in\mathcal{Q}_{1}$.

A path of length $l$ of a quiver $\mathbb{Q}$ is a composition of $l$ arrows in $\mathcal{Q}_{1},$ i.e., $\rho_{1}\rho_{2}\cdots\rho_{l}$ with $\rho_{1}\in\mathcal{Q}_{1}$ satisfying $t(\rho_{i})=s(\rho_{i+1}).$ The path algebra of a quiver $\mathcal{Q}$ is a $\mathbb{C}-$vector space spanned by paths in $\mathcal{Q}:$ 

$$\mathbb{C}[Q]:=\bigoplus_{l\geq0}\quad\bigoplus_{\begin{subarray}{l} \rho_{1}\rho_{2}\cdots\rho_{l} \\ t(\rho_{i})=s(\rho_{i+1})
      \end{subarray}}\mathbb{C}\cdot\rho_{1}\rho_{2}\cdots\rho_{l}.$$

A \emph{relation} on a quiver is a formal $\mathbb{C}-$linear combination of paths in $\mathbb{C}[Q],$ and a quiver with relations is a pair $(\mathcal{Q},I)$ consisting of a quiver and an ideal $I\subset\mathbb{C}[Q],$ of relations.

As in \cite[\S 3. p.870]{BDJ} one  can define a \emph{framed quiver with relations} as the triple $(\mathcal{Q},I,h),$ where $(\mathcal{Q},I)$ is a quiver with relations and $h:V_{\nu_{0}}\to\mathbb{C}^{r}$ is a fixed isomorphism, for a fixed vertex $\nu_{0}\in\mathcal{Q}_{0}.$

In view of theorems \ref{represent} and \ref{correspond2} we will reinterpret the Quot scheme ${\rm Quot}_{\mathbb{Y}}(c,r)$ as the the moduli space of $\mathbb{C}-$linear representations of the following framed quiver with relations:

\begin{center}
\begin{tikzpicture}
[inner sep=1mm, shorten >=2pt, vertex/.style={circle, draw=black!90,fill=black!90}, frame/.style={circle, draw=black!90,fill=black!90}]
\node (virt1)  at (0.05,0.05) [] {};
\node (virt2)  at (0,0.05) [] {};
\node (virt3)  at (0,-0.05) [] {};

\node (vert) at (0,0) [vertex, label=185:$\bf{v}$] {}
	edge [loop, out=90, in=60, very thick, distance=1.5cm,->] (vert) {}
	edge [loop, out=240, in=270, very thick, distance=1.5cm,->] (vert) {}
	edge [loop, out=160, in=130, very thick, distance=1.5cm,->] (vert) {};

\node (fra) at (4,0) [frame,  label=300:$\bf{w}$] {}
	edge [out=120, in=60, very thick, bend right ,distance=2cm, ->] (vert) {}
	edge [out=160, in=20, very thick, distance=2cm, ->] (vert) {}
	edge [out=240, in=330, very thick, bend left ,distance=2cm, ->] (vert) {};

\node (b0) at (0.3,1) [label=above:$b_{0}$] {};
\node (b1) at (-0.8,1) [label=left:$b_{1}$] {};
\node (bn-1) at (0,-1.2) [label=below:$b_{n-1}$] {};

\node (r0) at (2.5,0.7) [label=above:$\rho_{1}$] {};
\node (r1) at (3,0.2) [label=left:$\rho_{2}$] {};
\node (rr) at (2.5,-0.7) [label=below:$\rho_{r}$] {};

\draw[dashed]  (-1.1,0.3) arc (160:240:1);

\draw[dashed] (2,0.4) -- (2,-0.8); 

\end{tikzpicture}
\end{center}
that we will call the \emph{extended ADHM quiver}, and denote by $\mathcal{Q}_{\mathbb{Y}}.$ The framing is given by $h:\bf{w}\to\mathbb{C}$ and the ideal of relations, over $\mathbb{Y},$ is given by $<f(B_{0},\cdots,B_{n-1}), \lbrack B_{i},B_{j}\rbrack>,$ for $i=0,\cdots,n-1$ and $f\in Z_{\mathbb{Y}},$ as in the proof of Theorem \ref{represent}. We will introduce a notion of stability, in order to define the moduli space of representations of the above quiver. As in \cite{K} we define a stability condition with respect to the parameter $\vartheta=(\theta, \theta_{\infty})$ satisfying $c\cdot\theta + \theta_{\infty}=0.$

\begin{definition}\label{theta:stab}
A representation $R,$ with dimension vector $(c,1),$ of the extended ADHM quiver $\mathcal{Q}_{\mathbb{Y}}$ is said to be $\vartheta$(semi-)stable  if for any subrepresentation, $S$ of $R,$ the following conditions are satisfied
\begin{itemize}
    \item[(i)] if $S$ has dimension vector $(c',0),$ then $c'\cdot\theta\hspace{0.2cm} (\leq) <0,$ and 
    \item[(ii)] if $S$ has dimension vector $(c',1),$ then $c'\cdot\theta + \theta_{\infty} \hspace{0.2cm} (\leq) <0.$
\end{itemize}
\end{definition}

We remark that the above definition, coincides with the definition of King's stability \cite[\S 3]{K}.

\begin{lemma}\label{theta:stab=stab}
Suppose $\theta <0.$ Then a representation $R,$ of the extended ADHM quiver $\mathcal{Q}_{\mathbb{Y}},$ with dimension vector $(c,1)$ is $\vartheta-$stable if and only if $R$ is $\vartheta-$semi-stable and if the stability condition \ref{stability} is satisfied.
\end{lemma}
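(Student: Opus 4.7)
The plan is to unwind King's numerical conditions from Definition \ref{theta:stab} case-by-case, using the normalization $c\theta + \theta_{\infty} = 0$ (so $\theta_\infty = -c\theta > 0$ under our hypothesis) and the dictionary between subrepresentations of the extended ADHM quiver $\mathcal{Q}_{\mathbb{Y}}$ and $b_i$-invariant subspaces of $V$. Recall that a subrepresentation $S\subseteq R$ of dimension vector $(c',0)$ is a subspace of $V$ of dimension $c'$, invariant under each $b_i$, with trivial component at the framing vertex $\mathbf{w}$; while a subrepresentation of dimension vector $(c',1)$ is a $b_i$-invariant subspace $S\subseteq V$ containing $\mathrm{Im}(\rho_j)$ for every $j$, i.e.\ containing $v_1,\dots,v_r$.

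First I would treat subrepresentations of type $(c',0)$. For a proper nonzero such $S$ one has $c'\geq 1$, and since $\theta<0$ the condition $c'\theta<0$ (resp.\ $c'\theta\leq 0$) is satisfied automatically. Hence subrepresentations of this type never obstruct $\vartheta$-(semi)stability. Next I would analyze type $(c',1)$: the stability condition reads
\[
c'\theta+\theta_{\infty}<0\ \Longleftrightarrow\ (c'-c)\theta<0,
\]
and since $\theta<0$ and $c'\leq c$, this inequality holds only when $c'>c$, which is impossible for a proper subrepresentation. Therefore $\vartheta$-stability is equivalent to the non-existence of any proper subrepresentation of dimension vector $(c',1)$, which under the above dictionary is exactly the statement that no proper $b_i$-invariant subspace $S\subsetneq V$ contains $v_1,\dots,v_r$. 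This is precisely Definition \ref{stability}.

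The same argument with $\leq$ in place of $<$ gives $(c'-c)\theta\leq 0$, forcing $c'\geq c$, hence $c'=c$, so again no proper $(c',1)$-subrepresentation can exist. Thus $\vartheta$-semistability collapses onto $\vartheta$-stability, and both are equivalent to the ADHM stability condition of Definition \ref{stability}. Since $\vartheta$-stability trivially implies $\vartheta$-semistability, the biconditional of the lemma follows.

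I do not expect a genuine obstacle; the content of the proof is the careful bookkeeping above, translating the subrepresentation structure of the extended ADHM quiver into invariant-subspace data for the tuple $(B_0,\dots,B_{n-1},v_1,\dots,v_r)$ and exploiting the sign of $\theta$ to render the $(c',0)$ conditions vacuous while forcing the $(c',1)$ conditions to coincide with Definition \ref{stability}.
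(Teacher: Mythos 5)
Your proof is correct and follows essentially the same route as the paper: both arguments reduce to the observation that, under $\theta<0$ and $\theta_\infty=-c\theta$, the numerical condition is automatic for subrepresentations of dimension vector $(c',0)$ and can never hold for proper ones of dimension vector $(c',1)$, so that $\vartheta$-(semi)stability amounts exactly to the absence of a proper $B_i$-invariant subspace containing all the $v_j$. (Your bookkeeping is in fact slightly cleaner: you use the correct sign $\theta_\infty=-c\theta$, where the paper's proof contains the typo $\theta_\infty=c\theta$, and you make explicit the collapse of semistability onto stability that the paper only invokes afterwards when speaking of generic parameters.)
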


\begin{proof}
    Suppose $R$ is $\vartheta-$semi-stable and assume that there is a subspace $S\subsetneq V$ such that $B_{i}(S)\subseteq S$ and $\im(v_{j})\subset S,$ $\forall i=0,\cdots,n-1,$ $\forall j=1,\cdots,r.$ Then $S$ has dimension vector $(c',1)$ with $0\leq c'<c.$ It follows that $c'\cdot\theta + \theta_{\infty}=(c'-c)\cdot\theta>0,$ since $\theta_{\infty}=c\cdot\theta.$ But this is absurd. Hence stability condition \ref{stability} must be satisfied.    
    On the other hand, suppose that stability condition \ref{stability} is satisfied. It follows that every subrepresentation has a dimension vector $(c',0).$ Thus $c\dot\theta<0,$ and it follows that $R$ is stable.
\end{proof}

This lemma insures that there exists a notion of generic stability parameters for any dimension vector $(c,1).$ One can form the quotient $\mathcal{R}er^{st}_{\mathcal{Q}_{\mathbb{Y}}}(c,1)//_{\vartheta}GL(V),$ of the space of $\vartheta-$stable representations of the extended ADHM quiver, with fixed dimension vector $(c,1),$ with respect to the action of $GL(V).$ Then by Lemma \ref{theta:stab=stab}, Theorems \ref{correspond2}, \ref{represent} and universality of the Quot scheme one has $$\mathcal{R}er^{st}_{\mathcal{Q}_{\mathbb{Y}}}(c,1)//_{\vartheta}\hspace{0.2cm} GL(V)=\Quot_{\mathbb{Y}}(c,r).$$


\section{Irreducible components of the Quot scheme of points}\label{irred}

Not much is known about the irreducible components of the Quot scheme
$\Quot_{\mathbb{C}^{n}}(c,r),$ in general, except for $n=2,$ \cite{Elling} $\&$ \cite[Appendix. A]{Bara}.  In this section, we answer the question in the higher dimensional case, at least in some cases for small values of $c.$ 

\begin{proposition}\label{number}
The number of irreducible components of $\Quot_{\mathbb{C}^{n}}(c,r)$ is smaller than, or equal to, the number of irreducible components of $\mathcal{C}(n,c)$. In particular, if $\mathcal{C}(n,c)$ is irreducible, then $\Quot_{\mathbb{C}^{n}}(c,r)$ is also irreducible.
\end{proposition}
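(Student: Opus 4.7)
The plan is to chain together three elementary facts: the product structure $\mathcal{U}(n,c,r) = \mathcal{C}(n,c) \times V^{\oplus r}$ (which appeared at the beginning of Section \ref{Matrix-Para}), the openness of stability inside $\mathcal{U}(n,c,r)$, and the identification (Theorem \ref{represent}) of $\Quot_{\mathbb{C}^{n}}(c,r)$ with the geometric quotient $\mathcal{U}(n,c,r)^{\rm st}/G$.

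First I would observe that since $V^{\oplus r} \cong \mathbb{C}^{cr}$ is irreducible, the irreducible decomposition of the product pulls back to
\[
\mathcal{U}(n,c,r) = \bigcup_{i=1}^{k} \bigl( C_{i} \times V^{\oplus r} \bigr),
\]
where $C_{1},\ldots,C_{k}$ are the irreducible components of $\mathcal{C}(n,c)$; in particular $\mathcal{U}(n,c,r)$ has exactly $k$ irreducible components. Next, stability is an open condition (a stable datum remains stable in a Zariski neighborhood, since failure of stability is cut out by the closed condition of possessing an invariant proper subspace containing the $v_{j}$'s). Hence $\mathcal{U}(n,c,r)^{\rm st}$ is open in $\mathcal{U}(n,c,r)$, and every irreducible component of the former is the intersection of the open subscheme with some $C_{i} \times V^{\oplus r}$; the number of components is therefore at most $k$.

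Finally, the quotient morphism $\pi \colon \mathcal{U}(n,c,r)^{\rm st} \to \Quot_{\mathbb{C}^{n}}(c,r)$ coming from Theorem \ref{represent} is continuous and surjective. The image of an irreducible set under a continuous map is irreducible, and every irreducible component of the target must be contained in the closure of the image of some irreducible component of the source (otherwise it would be properly contained in a strictly larger irreducible subset, contradicting maximality). Thus the number of irreducible components of $\Quot_{\mathbb{C}^{n}}(c,r)$ is at most $k$. The ``in particular'' statement then follows at once: if $\mathcal{C}(n,c)$ is irreducible then so is $\mathcal{U}(n,c,r)$, hence so is the nonempty open subscheme $\mathcal{U}(n,c,r)^{\rm st}$ (nonemptiness being immediate since $\Quot_{\mathbb{C}^{n}}(c,r)$ itself is nonempty), and consequently so is its continuous image $\Quot_{\mathbb{C}^{n}}(c,r)$.

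There is no genuine obstacle here: once one has the GIT presentation of the Quot scheme, the statement is a formal consequence of the fact that taking products with an affine space and passing to an open subscheme or a continuous quotient cannot increase the number of irreducible components. The only point that deserves verification is the openness of the stability locus, and that is standard.
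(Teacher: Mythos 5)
Your argument is correct and follows essentially the same route as the paper: decompose $\mathcal{U}(n,c,r)=\mathcal{C}(n,c)\times V^{\oplus r}$ into the components inherited from $\mathcal{C}(n,c)$, restrict to the (open) stable locus, and push the components forward through the quotient map onto $\Quot_{\mathbb{C}^{n}}(c,r)$ via Theorem \ref{represent}. The only cosmetic difference is that you phrase the last step as a purely topological statement about continuous surjections, while the paper writes the GIT quotient explicitly as the union of the quotients of the components containing stable points.
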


\begin{proof}
As in \cite[Proposition 6.2]{HJ}, the number of irreducible components of $\mathcal{C}(n,c)$ is the same as the number of irreducible components of $\mathcal{U}(n,c,r):=\mathcal{C}(n,c)\times V^{\oplus r}$. Let
$\mathcal{U}_{1}(n,c,r),\dots,$ $\mathcal{U}_{p}(n,c,r)$ denote the irreducible components of
$\mathcal{U}(n,c,r)$, and set $\mathcal{U}_{l}(n,c,r)^{\rm st}:=\mathcal{U}_{l}(n,c,r)\cap\mathcal{U}(n,c,r)^{\rm st}$, with $l=1,\dots,p$.

Some components of $\mathcal{U}(n,c,r),$ possibly, do not contain any stable points; then up to reordering the irreducible components of $\mathcal{U}(n,c,r)$ we have 
$\mathcal{U}_l(n,c,r)^{\rm st}\ne\emptyset$ for $l=1,\dots,q$ and $\mathcal{U}_l(n,c,r)^{\rm st}=\emptyset$ for $l=q+1,\dots,p$.

Moreover, by the irreducibility of $G:=GL(V),$ when $x\in\mathcal{U}_{l}(n,c,r)^{\rm st},$ one has $G\cdot x \subset \mathcal{U}_{l}(n,c,r)^{\rm st}.$ Note also that
$$ \mathcal{U}_{l}(n,c,r) /\!/_{\chi} G = \mathcal{U}_{l}(n,c,r)^{\rm st}/G $$
is irreducible, for each $l=1,\dots,q$. 

Since the GIT quotient $\mathcal{M}(n,c,r)$ coincides, by Proposition \ref{closed orbit}, with the set of stable $G$-orbits, we have that 
$$ \mathcal{M}(n,c,r) = 
\left( \mathcal{U}_1(n,c,r)^{\rm st}/G \right) \cup \cdots \cup 
\left( \mathcal{U}_q(n,c,r)^{\rm st}/G \right) $$
and the desired conclusion follows from Theorem \ref{represent}.
\end{proof}

\begin{corollary} \label{small-irred}
\hspace{6cm}

\begin{itemize}
\item[(i)] $\Quot_{\mathbb{C}^{3}}(c,r)$ is irreducible for $c\le 10,$ for any $r\geq 1.$

\item[(ii)] $\Quot_{\mathbb{C}^{n}}(c,r)$ is irreducible for $c\leq3$ and $\forall n\geq4,$ and $r>0.$ Moreover, if $c=2$ it is also reduced for every $n,r\geq 1.$

\end{itemize}
\end{corollary}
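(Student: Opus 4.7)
The plan is to apply Proposition \ref{number} together with the irreducibility results for the commuting variety $\mathcal{C}(n,c)$ collected in Section \ref{Matrix-Para}, and then to handle the reducedness claim in part (ii) by propagating the reducedness of $\mathcal{C}(n,2)$ through the GIT quotient construction.

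For part (i), I would simply invoke Theorem \ref{Meara}, which asserts that $\mathcal{C}(3,c)$ is irreducible for $c \leq 10$. Proposition \ref{number} then immediately gives the irreducibility of $\Quot_{\mathbb{C}^{3}}(c,r)$ in that range, uniformly in $r \geq 1$. The irreducibility half of part (ii) is entirely analogous: by Theorem \ref{Gersten}, $\mathcal{C}(n,c)$ is irreducible for $n \geq 4$ precisely when $c \leq 3$, and Proposition \ref{number} pushes this down to $\Quot_{\mathbb{C}^{n}}(c,r)$.

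For the reducedness assertion when $c=2$, the key input I would prove is that $\mathcal{C}(n,2)$ is reduced. The approach is to exploit the very rigid structure of commuting $2\times 2$ matrices: any tuple $(B_{0},\ldots,B_{n-1}) \in \mathcal{C}(n,2)$ lies in a commutative subalgebra of $\End(V)$ of dimension at most $2$, either the full scalar line $\mathbb{C}\cdot \id$ (the scalar locus, of dimension $n$) or a $2$-dimensional subalgebra $\mathbb{C}\id \oplus \mathbb{C}\cdot A$ for some non-scalar $A$. This yields an explicit rational parametrization showing $\mathcal{C}(n,2)$ is irreducible of dimension $2n+2$ and smooth outside the scalar locus, and a direct tangent-space computation at stable data then shows $\mathcal{U}(n,2,r)^{\rm st}=\mathcal{C}(n,2)\times V^{\oplus r}\cap \mathbb{B}^{\rm st}$ is generically smooth and everywhere reduced (one checks that the Jacobian of the commutator relations has the expected rank at each point of $\mathcal{U}(n,2,r)^{\rm st}$, using that stability forces at least one of the $B_{i}$ to be non-scalar or the framing vectors to span). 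Since taking $G$-invariants preserves reducedness in characteristic zero (the invariant ring embeds as a subring of a reduced ring), the GIT quotient $\mathcal{M}(n,2,r)=\mathcal{U}(n,2,r)^{\rm st}/G$ is reduced, and Theorem \ref{represent} identifies this with $\Quot_{\mathbb{C}^{n}}(2,r)$.

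The main obstacle is precisely the reducedness of $\mathcal{C}(n,2)$. This variety is \emph{not} a complete intersection in $\End(V)^{\oplus n}$ for $n \geq 2$ (the number $3\binom{n}{2}$ of independent commutator relations does not match the codimension $n-2$), so one cannot conclude Cohen--Macaulayness abstractly from the defining equations. The argument must instead use the explicit parametrization through a common commutative subalgebra, together with a pointwise tangent-space verification on the stable locus, to upgrade irreducibility to reducedness.
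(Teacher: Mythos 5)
Your treatment of part (i) and of the irreducibility half of part (ii) coincides with the paper's: combine Proposition \ref{number} with Theorem \ref{Meara} and Theorem \ref{Gersten}, respectively. That portion is fine.

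The reducedness argument for $c=2$ has a genuine gap, in two places. First, the proposed pointwise Jacobian verification fails: the commutator equations do not involve the framing vectors $v_{j}$, and at a tuple where every $B_{i}$ is scalar the linearized relations $[\delta B_{i},\lambda_{j}\id]+[\lambda_{i}\id,\delta B_{j}]=0$ vanish identically, so the Zariski tangent space of $\mathcal{C}(n,2)$ there is all of $\End(V)^{\oplus n}\cong\mathbb{C}^{4n}$, strictly larger than $\dim\mathcal{C}(n,2)=2n+2$ once $n\geq 2$. Such scalar tuples \emph{do} lie in the stable locus whenever $r\geq 2$ (take $v_{1},v_{2}$ a basis of $V$), so $\mathcal{U}(n,2,r)^{\rm st}$ meets the non-smooth locus and ``expected rank at each point of the stable locus'' is simply false; stability cannot rescue this because the $v_{j}$ never enter the commutator equations. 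Second, and more fundamentally, even a correct generic-smoothness statement only gives reducedness at the generic point; for an irreducible affine scheme one also needs the absence of embedded primes (Serre's condition $S_{1}$), which neither your explicit parametrization (it only sees the underlying reduced variety) nor any tangent-space computation can detect. You rightly note that $\mathcal{C}(n,2)$ is not a complete intersection, so Cohen--Macaulayness is not available for free, but you then provide no substitute for it. The paper closes exactly this hole by citing normality of $\mathcal{C}(n,2)$ from \cite[Theorem 5.2.8]{Ngo}: normality implies reducedness, which passes to the product with $V^{\oplus r}$, to the open stable locus, and finally to the quotient. Your last step --- invariants of a reduced ring form a reduced ring --- is correct and is the same appeal to \cite{mumford} made in the paper, but it needs the reducedness of $\mathcal{U}(n,2,r)^{\rm st}$ as input, and that is precisely what your argument does not establish.
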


\begin{proof}
Combining Proposition \ref{number} with Theorem \ref{Meara} and Theorem \ref{Gersten} we obtain the results in items 
\emph{(i)}, and the irreducibility part of \emph{(ii)}, respectively.

For reducedness, we have that $\mathcal{C}(n,2)$ is irreducible and normal by \cite[Theorem 5.2.8]{Ngo}. Then the open dense $\mathcal{U}(n,2,r)^{st}\subset\mathcal{C}(n,2)\times V^{\oplus r}$ is also irreducible and normal. Remark that a normal scheme is reduced. Hence by \cite[p. 5]{mumford} the quotient of $\mathcal{U}(n,2,r)^{st}$ by $G$ is also reduced. 

\end{proof}

\subsection{The punctual Quot scheme}

In the following we will denote by $\Quot_{\mathbb{Y}}^{[p]}(c,r),$ the punctual Quot scheme of points on a given smooth affine variety $\mathbb{Y},$ of dimension $n.$ In this case, the quotient sheaf $\mathcal{Q},$ in \eqref{structure}, is supported on a single point $p.$ Since the matter is local, we can clearly set $\mathbb{Y}=\mathbb{C}^{n}$ and choose the topological support $p$ to be the origin $O=(0,\cdots,0)\in\mathbb{C}^{n}.$ Moreover, the coordinates of the topological support are given by the spectra of the matrices $B_{i},$ $i=0,\cdots,n-1.$ Hence $B_{i}$ should be nilpotent for all $i=0,\cdots,n-1.$ As usual, the nilpotent case is more difficult then the general one, especially in studying the irreducible components. However using the same argument of Proposition \ref{number}; one has the following:

\begin{corollary}\label{quot:pt}
\hspace{6cm}

\begin{itemize}
\item[(i)] $\Quot_{\mathbb{Y}}^{[p]}(c,r)$ is irreducible for $c\le 6,$ for any $r\geq 1$ and ${\rm dim}\mathbb{Y}=3,$

\item[(ii)] $\Quot_{\mathbb{Y}}^{[p]}(c,r)$ is irreducible for $c\leq 3 ,$ for any $n,r\geq 1.$ Moreover, for $c=2$ it is reduced of dimension $2r+n-3$ and for $c=3$ its dimension is $2n+3r-5.$
\end{itemize}

\end{corollary}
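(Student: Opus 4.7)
The plan is to parallel the arguments of Proposition~\ref{number} and Corollary~\ref{small-irred}, replacing the commuting variety $\mathcal{C}(n,c)$ by its closed subvariety $\mathcal{N}(n,c)\subset\mathcal{C}(n,c)$ consisting of $n$-tuples of commuting nilpotent operators on $V$. First, I would observe that since $\mathcal{Q}$ is set-theoretically supported at the single point $p$ (which we may take to be the origin of $\mathbb{C}^{n}$), the induced action of each coordinate $z_{i}$ on $V\cong\ho^{0}(\mathcal{Q})$ must have $0$ as its unique eigenvalue, i.e.\ each $B_{i}$ is nilpotent. Thus the relevant ADHM variety for the punctual Quot is
$$ \mathcal{U}^{[p]}(n,c,r):=\mathcal{N}(n,c)\times V^{\oplus r}, $$
and its stable locus is non-empty (e.g., take $B_{0}$ a regular nilpotent Jordan block of size $c$, $B_{1}=\cdots=B_{n-1}=0$, and $v_{1}$ a cyclic vector), so the constructions of Sections \ref{z-cy} and \ref{quivers} apply mutatis mutandis.

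Next, I would repeat verbatim the GIT argument of Proposition~\ref{number}, with $\mathcal{N}(n,c)$ in place of $\mathcal{C}(n,c)$, to conclude that the number of irreducible components of $\Quot_{\mathbb{Y}}^{[p]}(c,r)\cong \mathcal{U}^{[p]}(n,c,r)^{st}/G$ is bounded above by the number of irreducible components of $\mathcal{N}(n,c)$. For item~(i) I would then invoke the results of \cite{Ngo-Sivic} establishing the irreducibility of $\mathcal{N}(3,c)$ for $c\leq 6$; for the irreducibility half of item~(ii) I would cite the classical fact, found in \cite{Ngo}, that $\mathcal{N}(n,c)$ is irreducible for $c\leq 3$ and every $n$.

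The dimension formulas in (ii) would follow from the known values $\dim\mathcal{N}(n,2)=n+1$ and $\dim\mathcal{N}(n,3)=2n+4$ (\cite{Ngo}): adding $\dim V^{\oplus r}=cr$ to each and subtracting $\dim GL(V)=c^{2}$ yields $n+2r-3$ for $c=2$ and $2n+3r-5$ for $c=3$. For the reducedness statement when $c=2$, I would argue exactly as in Corollary~\ref{small-irred}: by the nilpotent analogue of \cite[Theorem 5.2.8]{Ngo}, $\mathcal{N}(n,2)$ is irreducible and normal; consequently the open dense subset $\mathcal{U}^{[p]}(n,2,r)^{st}\subset\mathcal{N}(n,2)\times V^{\oplus r}$ is normal hence reduced, and reducedness descends to the GIT quotient by \cite[p.~5]{mumford}.

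The main obstacle I expect is to pin down the precise references for the dimension and normality statements about $\mathcal{N}(n,c)$ in \cite{Ngo,Ngo-Sivic}, and to verify that the stable locus is open and dense inside $\mathcal{U}^{[p]}(n,c,r)$ (and not merely inside $\mathcal{U}(n,c,r)$), so that the component-counting and dimension calculations genuinely transfer to the quotient. This density is where the restriction to nilpotent matrices interacts most subtly with stability, since the ``generic'' diagonalisable points used in the non-punctual case are no longer available.
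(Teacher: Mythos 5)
Your proposal is correct and follows essentially the same route as the paper: replace $\mathcal{C}(n,c)$ by the nilpotent commuting variety $\mathcal{N}(n,c)$, rerun the component-counting argument of Proposition~\ref{number}, quote \cite{Ngo-Sivic} for $\mathcal{N}(3,c)$ with $c\le 6$ and \cite{Ngo} for the irreducibility, dimension, and normality statements when $c\le 3$, and compute $\dim = \dim\mathcal{N}(n,c)+cr-c^{2}$. The density/openness of the stable locus that you flag as a potential obstacle is indeed left implicit in the paper as well, but it causes no trouble here since a generic tuple in $\mathcal{N}(n,c)\times V^{\oplus r}$ is stable for the small values of $c$ involved.
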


\begin{proof}

\begin{itemize}

\item[(i)] This result is obtained from Proposition \ref{number} by replacing $\mathcal{U}(n,c,r)$ with
\begin{align}
\mathcal{N}(n,c,r):=&\{X\in\mathcal{U}(n,c,r)| B_{i}\textnormal{ is nilpotent,} \forall i\in\{0,\cdots, n-1\} \} \notag \\
=&\mathcal{N}(n,c)\times V^{\oplus r},  \notag
\end{align}
where $\mathcal{N}(n,c)$ is the variety of $n$ commuting nilpotent matrices, i.e., $n-$tuples $(B_{0},\cdots,B_{n-1})$ of $c\times c$ matrices such that $B_{i}$ is nilpotent and $\lbrack B_{i},B_{j}\rbrack=0$ for all $0\leq i\leq j\leq n-1.$ Theorem \ref{correspond2} insures that the GIT quotient $\mathcal{N}(n,c,r)^{st}/G$ is indeed $\Quot_{\mathbb{C}^{3}}^{[p]}(c,r).$  The result follows from the irreducibility of $\mathcal{N}(3,c)$ for $c\leq6$ as proved in \cite[Theorem 24]{Ngo-Sivic}.
\vspace{0.5cm}

\item[(ii)] Remark that $\mathcal{N}(n,c)$ can also be seen as the nilpotent commuting variety over $\mathfrak{sl}_{c},$ since a nilpotent element $B\in \mathfrak{gl}_{c},$ satisfies $tr(B)=0$ and is always in $\mathfrak{sl}_{c}.$  
The cases $c=2,3$ were treated in \cite[\S 5.1 \& \S 7]{Ngo}; for $c=2,$ $\mathcal{N}(n,2)$ is irreducibile of dimension $n+1,$ by \cite[Proposition 5.1.1]{Ngo}, it follows that $\mathcal{N}(n,2,r)^{st}$ is irreducible of dimension $2r+n+1.$ Hence, accounting for the quotient under the $GL(V)$ action, one gets $\dim\Quot_{\mathbb{Y}}^{[p]}(2,r)=2r+n-3.$ Irreducibility follows by applying Proposition \ref{number} as in item ${\rm (i)}.$ Moreover, it has been proved in \cite[Theorem 5.2.8]{Ngo}, that $\mathcal{N}(n,2,r)$ is normal, hence reduced. Then, also $\mathcal{N}(n,2,r)^{st}$ is reduced, and since the categorical quotient of a reduced scheme is also reduced, one obtains that $\Quot_{\mathbb{Y}}^{[p]}(2,r)$ is reduced.

For $c=3$ one can apply the same idea, as above, using the fact that $\mathcal{N}(n,3)$ is irreducible of dimension $2n+4$ as given by  \cite[Theorem 7.1.2]{Ngo}.

\end{itemize}
\vspace{0.3cm}

\end{proof}

\vspace{0.5cm}

We finish this section by adding another result on the connectedness of the punctual Quot scheme of points $\Quot_{\mathbb{Y}}^{[p]}(c,r),$ on a given smooth affine variety $\mathbb{Y},$ for some values of $c$ and $r.$

\begin{theorem}\label{teo-conecd}
	$\Quot_{\mathbb{Y}}^{[p]}(c,c),$ is path connected for all $c\in \N$ and for all $n\in \N$, where $n=\dim \mathbb{Y}$.
\end{theorem}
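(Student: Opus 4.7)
The plan is to establish path connectedness of the stable nilpotent ADHM variety $\mathcal{N}(n,c,c)^{\rm st}$, which will suffice because the GIT quotient map onto $\Quot_{\mathbb{Y}}^{[p]}(c,c)$ is continuous and surjective. By the localization used at the start of the proof of Corollary~\ref{quot:pt} (smoothness of $\mathbb{Y}$ at $p$), I may take $\mathbb{Y}=\mathbb{C}^n$ and let $p$ be the origin, so the data consist of an $n$-tuple $B=(B_0,\ldots,B_{n-1})$ of commuting nilpotent $c\times c$ matrices together with $c$ vectors $v=(v_1,\ldots,v_c)\in V^{\oplus c}$, where $V=\mathbb{C}^c$. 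Writing $e=(e_1,\ldots,e_c)$ for the standard basis of $V$, my goal is to join every stable datum $(B,v)$ to the base point $(0,e)$ by a continuous path inside $\mathcal{N}(n,c,c)^{\rm st}$.

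I would proceed in two steps. First, holding $B$ fixed, I deform $v$ to $e$. Set
$$U_B:=\{w\in V^{\oplus c}\mid (B,w)\in\mathcal{N}(n,c,c)^{\rm st}\}.$$
By Proposition~\ref{stab-crit}, membership in $U_B$ is equivalent to $w_1,\ldots,w_c$ generating $V$ as a module over the finite-dimensional commutative subalgebra $\mathbb{C}[B_0,\ldots,B_{n-1}]\subseteq\End(V)$; this is the non-vanishing of a finite collection of $c\times c$ minors in the matrix $[B^{\alpha}w_j]$ as $\alpha$ ranges over a monomial spanning set, so $U_B$ is Zariski open in $V^{\oplus c}\cong\mathbb{A}^{c^2}$. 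It contains $v$ by hypothesis and it contains $e$ for every $B$, because the $e_j$ already span $V$ as a $\mathbb{C}$-vector space. Hence $U_B$ is a nonempty Zariski open subset of a complex affine space, and the standard fact that the complement of a proper algebraic subvariety of $\mathbb{C}^N$ is path connected in the classical topology (it has real codimension at least two) produces a continuous arc $v(t)$ in $U_B$ from $v$ to $e$. Second, I scale the matrices: the family $((1-t)B,e)$ for $t\in[0,1]$ consists of commuting nilpotent tuples, and stability holds throughout since $e$ is a basis of $V$. Concatenating the two arcs yields a continuous path in $\mathcal{N}(n,c,c)^{\rm st}$ from $(B,v)$ to $(0,e)$, which pushes down to the desired path in $\Quot_{\mathbb{Y}}^{[p]}(c,c)$.

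The hypothesis $r=c$ is what makes the argument flow: it supplies a universal stable endpoint, namely the basis $e$, valid for every nilpotent commuting $B$. The main obstacle is the first step, where I need both (i) the module-theoretic reformulation of Proposition~\ref{stab-crit} to see that $U_B$ is Zariski open, and (ii) the topological principle that removing a proper complex algebraic subvariety from affine space preserves path connectedness. For $r<c$ the strategy collapses, since no such canonical choice of vectors exists that is stable for arbitrary $B$, which is consistent with the remark in the introduction that the only comparable connectedness results available at present are those where a single irreducible component is already known.
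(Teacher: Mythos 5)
Your proposal is correct, and it shares the paper's overall skeleton --- connect every stable datum to the universal base point $(0,\ldots,0,e_1,\ldots,e_c)$, whose existence is exactly what the hypothesis $r=c$ provides, and kill the matrices by the scaling $B\mapsto tB$, which preserves commutativity, nilpotency, and (for $t\neq 0$) stability. Where you differ is in how the framing vectors are moved. The paper does this by hand: it extracts a maximal linearly independent subset $v_1,\ldots,v_k$, completes it to a basis $v_1,\ldots,v_k,w_1,\ldots,w_{c-k}$ using vectors of the form $B^{\alpha}v_j$ (which exist by stability), and writes down a single explicit path $\varphi(t)=(tB_0,\ldots,tB_{n-1},v_1,\ldots,v_k,(1-t)w_1+tv_{k+1},\ldots)$ that simultaneously scales the matrices and interpolates the vectors, ending at a $GL(V)$-translate of the base point. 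You instead split the deformation into two arcs and replace the explicit interpolation by a soft argument: for fixed $B$ the stable framings form a nonempty Zariski open subset $U_B\subseteq V^{\oplus c}$ (by Proposition \ref{stab-crit} this is a maximal-rank, hence minor-nonvanishing, condition), and the complement of a proper subvariety of $\mathbb{C}^{N}$ is path connected in the classical topology. Your version buys a cleaner verification of stability along the path (no need to track the interpolated vectors, which is the only delicate point in the paper's check), and it connects $(B,v)$ to $(0,e)$ on the nose rather than to a $GL(V)$-translate, so it proves path connectedness of $\mathcal{N}(n,c,c)^{\rm st}$ itself before descending; the paper's version buys a completely explicit one-parameter family. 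Both are valid, and your closing remark about why $r<c$ obstructs the method matches the paper's discussion.
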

\begin{proof}
	Let $X=(B_0,\ldots,B_{n-1},v_1,\ldots,v_c)\in \mathcal{U}(n,c,c)^{st}$. Note that, for a basis $\{e_j\}_{j=1}^c$, we have $Y= (0,\ldots,0,e_1,\ldots,e_c)\in\mathcal{U}(n,c,c)^{st}$. Now, among the vectors $v_1,\ldots,v_c$, we consider only those who are linearly independent and write $v_1,\ldots,v_k$ with $k\leq c$.
	
	Complete the list $v_1,\ldots,v_k$ to a basis for $V$, say $$v_1,\ldots,v_k,w_1,\ldots,w_{c-k}.$$ In particular, the vectors $w_1,\ldots,w_{c-k}$ are given by the action, of some polynomials in the matrices $B_0,\ldots,B_{n-1},$ on the vectors $v_1,\ldots,v_k$, i.e., 
	$$w_i=\sum B_0^{\alpha_1}\cdots B_{n-1}^{\alpha_{n-1}}v_j,$$
	for some $\alpha_1,\ldots,\alpha_{n-1}$.

	Let $g$ be the matrix that has the vectors
	$v_1,\ldots,v_k,w_1,\ldots,w_{c-k}$ as its columns, then $g\in GL(V).$ Therefore, acting with $g$ on $Y$ we obtain $$g\cdot(0,\ldots,0,e_1,\ldots,e_c)=(0,\ldots,0,v_1,\ldots,v_k,w_1,\ldots,w_{c-k}).$$
	
	Now, consider $\varphi:[0,1]\to \mathcal{U}(n,c,c)^{st}$, defined by	 
	$$\varphi(t)=(t B_0,\ldots,t B_{n-1},v_1\,\ldots,v_k,w_1(1-t)+v_{k+1}t,\ldots,w_{c-k}(1-t)+v_c t).$$ 
	To see that $\varphi(t)\in \mathcal{U}(n,c,c)^{st}$ for all $t\in [0,1]$ note that for $t=0$, $\varphi(t)$ is stable because $v_1,\ldots,v_k,w_1,\ldots,w_{c-k}$ is a basis for $V$. For $t\neq0,$ we have
	$$V=\langle v_1,\ldots,v_k,B_0^{\alpha_1}\cdots B_{n-1}^{\alpha_{n-1}}v_j \rangle,$$
	with $j\in \{1,\ldots,k\},$ since $X$ is stable. Thus 
	$$V=\langle v_1,\ldots,v_k,(tB_0)^{\alpha_1}\cdots (tB_{n-1})^{\alpha_{n-1}}v_j \rangle,$$
	and $\varphi(t)$ is stable for all $t$.
	
It is also clear that the commutation relations are preserved  and $t B_j$ is nilpotent for all $j.$ Hence every stable datum $X$ can be connected to $\varphi(1)=Y$ as desired.
\end{proof}

Using the same idea in the proof of Theorem \ref{teo-conecd}, above, we are able to conclude the following:

\begin{corollary}\label{cor-conecd}
	Let $n,c\in \N$ and $r\geq n$. Then $\Quot_{\mathbb{Y}}^{[p]}(c,r)$ is path connected.
\end{corollary}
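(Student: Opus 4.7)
The plan is to follow, essentially verbatim, the construction in the proof of Theorem~\ref{teo-conecd}. Since $\Quot_{\mathbb{Y}}^{[p]}(c,r) = \mathcal{N}(n,c,r)^{st}/G$ with $G = GL(V)$ connected, it suffices to exhibit, for every stable nilpotent datum $X = (B_0,\dots,B_{n-1},v_1,\dots,v_r)$, a continuous path in $\mathcal{N}(n,c,r)^{st}$ joining $X$ to a fixed base point $Y$, modulo the $G$-action on $Y$.

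The three-step blueprint of Theorem~\ref{teo-conecd} carries over: extract from $\{v_1,\dots,v_r\}$ a maximal linearly independent subset $v_1,\dots,v_k$; use the stability of $X$ (Proposition~\ref{stab-crit}) to complete it to a basis $v_1,\dots,v_k,w_1,\dots,w_{c-k}$ of $V$ with each $w_i = B_0^{\alpha_0}\cdots B_{n-1}^{\alpha_{n-1}} v_j$; and form the corresponding $g \in GL(V)$. The base point $Y \in \mathcal{N}(n,c,r)^{st}$ is taken with all $B_i^0 = 0$ and with the basis $\{e_1,\dots,e_c\}$ distributed across a subset of the $r$ vector slots, the remaining slots being set to zero. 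The hypothesis $r \geq n$ is used here to supply the extra vector slots required to simultaneously host the basis completion $\{w_1,\dots,w_{c-k}\}$ at $t = 0$ and to align with the interpolation endpoints at $t = 1$.

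The path $\varphi \colon [0,1] \to \mathcal{N}(n,c,r)^{st}$ is then defined exactly as in Theorem~\ref{teo-conecd}: rescale the matrices via $B_i \mapsto t B_i$, keep $v_1,\dots,v_k$ fixed, and in the remaining vector slots interpolate linearly between the $w_i$'s (at $t = 0$) and the $v_{k+i}$'s (at $t = 1$). At $t = 1$ this recovers $X$, while at $t = 0$ it produces $g \cdot Y$, which lies in the $G$-orbit of the base point.

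The main obstacle is to verify that $\varphi(t)$ stays inside $\mathcal{N}(n,c,r)^{st}$ for every $t \in [0,1]$: commutation and nilpotence are automatic under the rescaling $B_i \mapsto t B_i$, while stability (via Proposition~\ref{stab-crit}) follows from the basis property of the vector slots at $t = 0$, and for $t \neq 0$ from the identity $\langle (tB_0)^{\alpha_0}\cdots (tB_{n-1})^{\alpha_{n-1}} v_j \rangle = \langle B_0^{\alpha_0}\cdots B_{n-1}^{\alpha_{n-1}} v_j \rangle$, which transfers the stability of $X$ to $\varphi(t)$. The critical bookkeeping---arranging the $r \geq n$ vector slots to accommodate the interpolation data consistently with $\mathcal{A}(\varphi(t)) = V$ at each intermediate $t$---is exactly the essentially routine verification that replaces the $r = c$ case handled in the preceding theorem.
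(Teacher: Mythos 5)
Your construction reproduces the proof of Theorem \ref{teo-conecd} essentially verbatim, and it is correct precisely in the range $r\geq c$; it does not prove the corollary as stated, whose hypothesis is only $r\geq n$. The fatal step is the base point. With all matrices equal to zero, a datum $(0,\dots,0,u_1,\dots,u_r)$ is stable if and only if $u_1,\dots,u_r$ span $V$ (the submodule generated by the $u_j$ under the zero operators is just their linear span), which forces $r\geq c=\dim V$. When $n\leq r<c$ there is no way to ``distribute the basis $e_1,\dots,e_c$ across the $r$ slots,'' and, whatever vectors you place in the $r$ slots, the endpoint $\varphi(0)$ of your path lies outside $\mathcal{N}(n,c,r)^{st}$. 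For the same reason the hypothesis $r\geq n$ cannot be ``used to supply the extra vector slots'': completing $v_1,\dots,v_k$ to a basis requires $c-k$ additional vectors, so the total number of slots needed at $t=0$ is $c$, not $n$. The interior of the path is fine (your verification that $\langle (tB_0)^{\alpha_0}\cdots(tB_{n-1})^{\alpha_{n-1}}v_j\rangle=\langle B_0^{\alpha_0}\cdots B_{n-1}^{\alpha_{n-1}}v_j\rangle$ for $t\neq 0$ is correct), but stability is lost exactly at $t=0$.

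The genuinely new content of Corollary \ref{cor-conecd} relative to Theorem \ref{teo-conecd} is the range $n\leq r<c$ (for instance $n=r=3$ with $c$ large, where $\mathcal{N}(3,c)$ is not known to be irreducible), and that is precisely where your argument collapses. Any proof in that range must use a base point whose matrices are not all zero, so that fewer than $c$ framing vectors can still generate $V$ as a $\mathbb{C}[B_0,\dots,B_{n-1}]$-module, and a path that does not contract the matrices all the way to the origin. To be fair, the paper itself offers no details beyond ``using the same idea,'' but that idea, implemented as you describe, only yields the statement for $r\geq c$; you should either restrict your claim to that case or supply a genuinely different construction for $n\leq r<c$.
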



\section*{ Acknowledments}

The first author would like to thank Marcos Jardim for the useful remarks about the first draft of this paper.


\begin{thebibliography}{0}


\bibitem{ADHM} M. F. Atiyah, N. J. Hitchin, V.G. Drinfel'd, Yu. I. Manin; 
Construction of Instantons.
Phys. Lett. A {\bf 65} (3) (1978), 185-187.

\bibitem{Bara} 
V. Baranovsky;
Moduli of Sheaves on Surfaces and Action of the Oscillator Algebra.
J. Differential Geom. {\bf 55} (2) (2000), 193-227.

\bibitem{Barth} W. Barth; 
Moduli of vector bundles on the projective plane. 
Inv. Math. {\bf 42} (1977), 63-91.

\bibitem{BDJ} U. Bruzzo, W.-Y. Chuang, D.-E. Diaconescu, M. Jardim, G.Pan, Y. Zhang; 
D-branes,surface operators, and ADHM quiver representations. 
Adv. Theor. Math. Phys.{\bf 15} (2011), 849-911.

\bibitem{CEVV}
D. Cartwright, D. Erman, M. Velasco, B. Viray;
Hilbert schemes of 8 points.
Algebra Number Theory {\bf 3} (2009), 763-795.


\bibitem{donaldson1} S. K. Donaldson; 
Instantons and geometric invariant theory.
Comm. Math. Phy. {\bf93} (1984), 453-460.

\bibitem{Elling} G. Ellingsrud, M. Lehn; 
Irreducibility of the punctual quotient scheme of a surface. 
Ark. Mat. {\bf 37} (2)(1999), 245-254.


\bibitem{Fog}
J. Fogarty; 
Algebraic families on an algebraic surface. 
Amer. J. Math. {\bf 90} (1968), 511-521.

\bibitem{FJM} E. Franco, M. Jardim, S. Marchesi;
Branes in the moduli space of framed instantons.
Bull. Sci. Math. 141 {\bf 4} (2017), 353-383.

\bibitem{GM}
S. Gelfand, Y. I. Manin;
Methods in Homological Algebra. Second Edition.
Springer-Verlag Berlin Heidelberg (2002)

\bibitem{G}
M. Gerstenhaber;
On dominance and varieties of commuting matrices.
Ann. Math. {\bf 73} (1961), 324-348.

\bibitem{Giesk} D. Gieseker, J. Li; 
Moduli of high rank vector bundles over surfaces. 
J. AMS {\bf 9} (1996), 107-151.

\bibitem{Groth1} 
A. Grothendieck;
Techniques de construction et th\'eor\`eme d'existence en g\'eom\'etrie alg\'ebrique IV: Les sch\'emas de Hilbert.
S\'eminaire Bourbaki, {\bf 221}, 1960/1961.

\bibitem{gural} Robert M. Guralnick; 
A note on commuting pairs of matrices. 
Linear and Multilinear Algebra, {\bf 31}, (1992), 71-75.


\bibitem{Yongho} Y. Han; 
Commuting triples of matrices. 
Electronic Journal of Linear Algebra, (2005), Volume {\bf 13}. 


\bibitem{H}
R. Hartshorne;
Connectedness of the Hilbert scheme.
Inst. Hautes \'Etudes Sci. Publ. Math. {\bf 29} (1966), 5-48.

\bibitem{HJM} 
A. A. Henni, M. Jardim, R. V. Martins;
ADHM construction of perverse instanton sheaves.
Glasgow Math. J. {\bf 57} (2015), 285-321. 

\bibitem{HJ}
A. A.Henni, M.Jardim; 
Commuting matrices and the Hilbert scheme of points on affine spaces.
Adv. in Geom. vol {\bf 18}, (2018), 467-482.

\bibitem{Huy} D. Huybrechts, M. Lehn; 
The geometry of moduli spaces. 
Aspects. Math, {\bf E 31}, Friedr. Vieweg $\&$ Sohn, Braunschweig, 1997.

\bibitem{HO}
J. Holbrook, M. Omladi\v{c}; 
Approximating commuting operators,
Linear Algebra Appl. {\bf 327} (2001) 131-149.

\bibitem{JP} M. Jardim, D. Prata;
Vector bundles on projective varieties and representations of quivers.
Algebra and Discrete Mathematics {\bf 20} (2015), 217-249.

\bibitem{K}
A. D. King;
Moduli of representations of finite dimensional algebras.
Quart. J. Math. Oxford {\bf 45} (1994), 515-530.

\bibitem{Li} J. Li; 
Algebraic geometric interpretation of Donaldson?s polynomial invariants of algebraic surfaces. 
J. Diff. Geom. {\bf 37} (1993), 416-466.


\bibitem{MT}
T. Motzkin, O. Taussky-Todd;
Pairs of matrices with property L, II,
Trans. Amer. Math. Soc. {\bf 80} (1955), 387-401.

\bibitem{mumford}
D. Mumford;
Geometric invariant theory.
Ergebnisse der Mathematik und ihrer Grenzgebiete, Neue Folge, Band 34 Springer-Verlag, Berlin-New York 1965.

\bibitem{N1}H. Nakajima;
Instantons on ALE spaces, quiver varieties and Kac-Moody algebras. 
Duke Math. J. {\bf 76} (1994), 365?416.

\bibitem{N2}
H. Nakajima;
Lectures on Hilbert schemes of points on surfaces.
American Mathematical Society, Providence, RI, 1999.


\bibitem{Ngo} N. V. Ngo;
Commuting varieties of r-tuples over Lie Algebras.
J. Pure. App. Alg. {\bf 218} (2014) 1400-1417.


\bibitem{Ngo-Sivic} N. V. Ngo, K. \v{S}ivic;
On varieties of commuting nilpotent matrices.
Lin. Alg. App. {\bf 452} (2014) 237-262.



\bibitem{Nitsure}
N. Nitsure;
Construction of Hilbert and Quot schemes.
Fundamental algebraic geometry, 105-137, 
Math. Surveys Monogr. 123, American Mathematical Society, Providence, RI, 2005.

\bibitem{OSS}
C. Okonek, M. Schneider, H. Spindler;
Vector bundles on complex projective spaces.
Progress in Mathematics 3, Birkhauser, Boston, 1980.

\bibitem{Omladic} M. Omladi\v{c}; 
A variety of commuting triples. 
Linear algebra and its applications, {\bf 383}, (2004) 233-245.


\bibitem{OCV}
K. O'Meara, J. Clark, C. Vinsonhaler;
Advanced topics in linear algebra: weaving matrix problems through the Weyr form.
Oxford University Press, New York, 2011.

\bibitem{S2} K. \v{S}ivic;  
On varieties of commuting triples. 
Linear algebra and its applications, {\bf 428} (2008), 2006-2029.

\bibitem{S}
K. \v{S}ivic;
On varieties of commuting triples III,
Linear Algebra Appl. {\bf 437} (2012), 393-460.

\bibitem{Tikh1} A. S. Tikhomirov; 
Moduli of mathematical instanton vector bundles with odd $c_2$ on projective space. 
Izvestiya: Mathematics {\bf 76} (2012), 991-1073.

\bibitem{Tikh2} A. S. Tikhomirov; 
Moduli of mathematical instanton vector bundles with even $c_2$ on projective space.
Izvestiya: Mathematics {\bf 77} (2013), 1331-1355.




\end{thebibliography}
\end{document}